\documentclass[11pt,a4paper]{article}

\usepackage[T1]{fontenc}
\usepackage[utf8]{inputenc}
\usepackage[margin=1in]{geometry}
\usepackage{amsmath,amssymb,amsfonts,amsthm}
\usepackage{graphicx}
\usepackage{array,booktabs,multirow}
\usepackage{subfig}
\usepackage{caption}
\usepackage{algorithm}
\usepackage{algorithmic}
\usepackage{placeins}
\usepackage{xcolor}
\usepackage[hidelinks]{hyperref}
\hypersetup{
  pdftitle={A Structure-Preserving Scheme for Computing the Gradient of the Eikonal Equation},
  pdfauthor={Ka Lun Leung and Shingyu Leung}
}

\newtheorem{proposition}{Proposition}

\newcommand{\R}{\mathbb{R}}

\newcommand{\grad}{\nabla}

\title{A Structure-Preserving Scheme for Computing the Gradient of the Eikonal Equation}
\author{
  Ka Lun Leung \qquad Shingyu Leung\\[0.5em]
  {\small Department of Mathematics}\\
  {\small The Hong Kong University of Science and Technology}\\
  {\small Clear Water Bay, Hong Kong}\\[0.3em]
  {\small\texttt{klleungat@connect.ust.hk}, \texttt{masyleung@ust.hk}}
}
\date{}

\begin{document}
\maketitle

\begin{abstract}
{We propose a first-order, structure-preserving method for directly computing the normalized traveltime gradient of the eikonal equation in smooth first-arrival regions. The method combines analytical derivatives of spherical linear interpolation (SLERP) in a Godunov-type upwind discretization with spherical forward Euler (SFE) pseudo-time integration. }{This construction gives a tangent residual and keeps every iterate on $\mathbb{S}^2$ without projection or normalization. }{The SLERP discretization exactly preserves a family of curved orientation equilibria. }{Removing the background discretization bias improves absolute orientation accuracy near these equilibria, allowing a prescribed error tolerance to be reached on a coarser grid. }{We prove the equilibrium-preservation property and, separately, a smaller angular error for SFE than normalized Euler in a one-cell relaxation under the stated step-size condition. }{Two- and three-dimensional experiments examine convergence, geometric preservation, and computational cost. }{Comparisons with gradient recovery from scalar traveltimes show more accurate directions on the same grid in the curved media considered. The method provides direction fields for wave propagation and geometric optics, with potential use in adjoint-based inverse problems.}
\end{abstract}

\section{Introduction} 
\label{Sec:Introduction}

{In this work, we develop a structure-preserving discretization for directly computing the direction of the traveltime gradient associated with the eikonal equation}
\begin{equation} \label{eq:Eikonal_equation}
    \|\nabla \phi(\mathbf{x})\| = \frac{1}{c(\mathbf{x})}, 
    \qquad c(\mathbf{x}) > 0,
\end{equation}
where $\phi(\mathbf{x})$ denotes the scalar potential function and $c(\mathbf{x})$ the spatially varying wave speed.  
Of particular interest is the gradient field $\nabla \phi$, which characterizes the local propagation direction of the wavefront.  
{To compute this direction directly, we construct a Godunov-type upwind discretization using analytical endpoint derivatives of SLERP and evolve the resulting system by the spherical forward Euler (SFE) method. The scheme gives a first-order approximation of smooth orientation fields and preserves the unit-sphere constraint at every pseudo-time step.}

{The proposed spatial discretization exactly preserves a family of curved orientation equilibria, including those rotated relative to the grid. For these fields, the discrete directional turning balances the refraction due to the medium. This balance removes the background discretization bias and improves the absolute accuracy of the orientation near the equilibria. We also examine the effect of time integration using a one-cell relaxation, for which SFE gives a smaller angular error under a suitable step-size condition. The spatial discretization and pseudo-time update are compared separately in the numerical experiments, where we study the accuracy of the computed direction fields and the cost of reaching prescribed error tolerances.}

A representative example arises from the scalar Helmholtz equation describing a time-harmonic wavefield $U(z,x,\omega)$ in an inhomogeneous medium,
$
\Delta U + \frac{\omega^2}{c^2(z,x)}\,U 
= -\delta(z-z_s)\prod_{j=1}^{\bar{n}}\delta(x_j - x_{s,j}),
$
where $\omega$ denotes the angular frequency, $c(z,x)$ the wave speed, and $(z_s,x_s)$ the source location.  
Under the high-frequency (geometric optics) approximation, we employ the ansatz
$
U(z,x,\omega) = \big(A(x) + O(\tfrac{1}{\omega})\big)\exp\!\big(i\omega T(x)\big),
$
where $T(x)$ is the traveltime (phase) and $A(x)$ the amplitude~\cite{cor83,ockst02,qialeu04}.  
Substitution into the Helmholtz equation and collection of leading-order terms in $\omega$ yield the geometric optics relations
$
\|\nabla_x T\|^2 = \frac{1}{c^2}$ and $\nabla_x T \cdot \nabla_x A + \tfrac{1}{2} A\, \Delta_x T = 0$.
The first equation is the eikonal equation governing the phase, while the second describes amplitude transport along characteristics determined by the traveltime gradient $\nabla_x T$.

{
In adjoint-state traveltime tomography, the velocity field is reconstructed
from first-arrival traveltimes, and the gradient $\nabla T$ appears in
the adjoint transport operator used to compute sensitivities~\cite{leuqia0601}.
In our earlier work~\cite{leuqia0601}, we first compute the scalar
eikonal traveltime and then approximate its gradient by finite differences.
Several established methods are available for the scalar problem. These
include causal label-setting and fast marching methods
~\cite{tsi95,Sethian1996,popset97,set99siamrev}, as well as Gauss--Seidel
fast sweeping methods on Cartesian, triangular and surface grids
~\cite{tcoz03,kaooshqia04,zha05,qiazhazha07,qiazhazha0702,xuzhayon10,wonleu15}.
High-order, discontinuous Galerkin, factored and absolutely convergent
variants have also been developed
~\cite{zhashu03,zhazhaqia06,zclzs11,chechokao13,luoqiabur14,huzha24}.
Other approaches include fast semi-Lagrangian
~\cite{cristianifalcone2007fast}, fast iterative
~\cite{jeongwhitaker2008fast}, two-scale
~\cite{chaconvladimirsky2012fast}, and sparse-grid fixed-point sweeping
WENO methods~\cite{miksiszhang2024sparse}.
}

{
It has been observed that gradients recovered from first-order monotone
eikonal schemes may exhibit superconvergence in smooth regions, and that
compact upwind constructions can improve the gradient approximation~\cite{benluozha10}.
In this scalar-first approach, the accuracy of the recovered gradient
depends on both the scalar approximation and the subsequent differentiation.
Here, we instead derive and discretize an equation for the normalized
traveltime gradient $\mathbf n=\nabla\phi/\|\nabla\phi\|$, which
determines the direction used in ray tracing, amplitude transport and
adjoint inversion. By treating $\mathbf n$ as the primary variable, we
can analyze consistency directly for the orientation field and construct
the spatial reconstruction and time integration on the sphere.
}

{The use of a unit-vector unknown also relates the present formulation
to numerical methods for sphere-valued fields. Related geometric
discretizations include Lie-group and manifold
integrators~\cite{iserlesetal2000lie}, geodesic finite elements
~\cite{sander2012geodesic,sander2016higher}, constrained finite elements
for sphere-valued maps~\cite{bartels2009convergent}, structure-preserving
finite volumes~\cite{boscheri2021structure}, and intrinsic sphere-valued
Runge--Kutta methods~\cite{leuchalee24}. The present work follows this
geometric viewpoint in discretizing the eikonal orientation field, which
is used in ray tracing~\cite{julgub77,cer01,rawhausam07} and adjoint inversion
~\cite{leuqia0601,ple06,tncc09,lileuqia14,hqcwwl21}.
}

The remainder of this paper is organized as follows.  
Section~\ref{sec:background} reviews the background material and geometric preliminaries.  
Section~\ref{Sec:Proposal} presents the mathematical formulation and numerical construction of the proposed scheme.  
{The formulation is developed for smooth first-arrival regions.
Section~\ref{sec:Math_For} discusses compatibility with a scalar
traveltime, while Sections~\ref{sec:causality}--\ref{sec:initial}
describe the characteristic and initialization conditions used to
select the desired orientation field.
}{Section~\ref{sec:projection_analysis} then analyzes the comparison
methods to separate the contribution of the SLERP spatial discretization,
which is shared by normalized Euler and SFE, from that of the pseudo-time update.}
Section~\ref{sec:Num_Exp} reports numerical experiments and convergence studies.  
Finally, Section~\ref{sec:conc} summarizes the main findings and discusses possible extensions.

\section{Background}
\label{sec:background}

In this section, we summarize the mathematical and numerical ingredients underlying the proposed structure-preserving method for computing the traveltime gradient.  
We first review a related Godunov-type formulation for the linear advection equation, as used in fast sweeping and related iterative eikonal solvers~\cite{tcoz03,zha05,jeongwhitaker2008fast,leuqia0601}, to clarify the relation between upwind differencing, steady transport equations, and pseudo-time relaxation.  
We then introduce the spherical forward Euler (SFE) method~\cite{leuchalee24}, which forms the basis of spherical total-variation-diminishing Runge--Kutta (STVDRK) integrators for evolving ordinary differential equations on the unit sphere while preserving geometric structure and stability.

\subsection{Godunov upwinding and pseudo-time relaxation for the advection equation}
\label{SubSec:GodunovFSMAdvectionEqn}

We consider the one-dimensional scalar transport equation
$a(x) u_x = f(x)$, for a scalar function $u = u(x)$ with prescribed boundary conditions, where both the advection coefficient $a(x)$ and the source term $f(x)$ may vary spatially. 
Using a standard Godunov upwind discretization of the type underlying
sweeping and iterative eikonal methods~\cite{tcoz03,zha05,jeongwhitaker2008fast}, we obtain
\[
 a_i^+ \, \frac{{u_i-u_{i-1}}}{\Delta x}
 + a_i^- \, \frac{u_{i+1} - u_i}{\Delta x}
 = f_i ,
\]
where $a_i^{\pm} = \tfrac{1}{2}(a_i \pm |a_i|)$ represent the positive and negative characteristic contributions, respectively. 
This discretization naturally incorporates the direction of the underlying characteristics: when the flow is from left to right, i.e., $a_i > 0$, a backward difference is employed at $x = x_i$; otherwise, a forward difference is used.  

From this relation, the following iterative update can be derived:
\begin{equation}
\label{eq:Godunov_scalar_iterative_update}
    u_i^{k+1} =
    \frac{a_i^+}{|a_i|} u_{i-1}^{*}
    {-}\frac{a_i^-}{|a_i|} u_{i+1}^{*}
    + \frac{\Delta x}{|a_i|} f_i ,
\end{equation}
where the superscript $*$ denotes the value taken either from the current iteration $k$ or the next iteration $k+1$, depending on the current sweeping direction. 
This formulation highlights how information propagates along characteristics while maintaining computational efficiency through directional updates.

An alternative interpretation of this scheme arises by introducing an artificial time variable, transforming the steady transport equation into its time-dependent counterpart, $u_t + a(x) u_x = f(x)$. Applying a forward Euler discretization in time, together with the same upwind approximation in space, leads to
\begin{equation}
\label{eq:time_transport_iterative}
    u_i^{k+1} =
    \left(1 - \frac{|a_i| \Delta t}{\Delta x}\right) u_i^k
    + \frac{a_i^+ \Delta t}{\Delta x} u_{i-1}^k
    {-}\frac{a_i^- \Delta t}{\Delta x} u_{i+1}^k
    + \Delta t f_i.
\end{equation}
When the time step satisfies the optimal CFL condition $\Delta t = \Delta x / |a_i|$, this iteration nearly reduces to an update formula similar to \eqref{eq:Godunov_scalar_iterative_update}.
This connection motivates our later construction: rather than solving the spherical steady equation by a direct algebraic update, we use a Godunov-type spatial discretization together with a pseudo-time evolution toward steady state.

\subsection{Spherical Forward Euler (SFE)}
\label{SubSec:SFE}

We consider the ordinary differential equation (ODE) on the unit sphere $\mathbb{S}^2$, $\mathbf{p}'(t) = f(\mathbf{p}(t), t)$ with $\mathbf{p}(0) = \mathbf{p}_0 \in \mathbb{S}^2$, where the vector field $f: \mathbb{S}^2 \times [0,\infty) \to T(\mathbb{S}^2)$ is assumed to be Lipschitz continuous, ensuring existence and uniqueness of solutions.  The principal numerical difficulty lies in preserving the spherical constraint $\|\mathbf{p}(t)\|=1$, since standard time integrators do not, in general, guarantee that discrete approximations remain on $\mathbb{S}^2$. A common strategy applies classical Runge--Kutta (RK) schemes in $\mathbb{R}^3$ followed by projection onto the sphere at each time step. 
{Projection after a full RK step enforces the constraint at the step endpoint, while stage-projected schemes also constrain intermediate stages. In either case, accuracy and stability depend on the integrator as well as the projection.}

In~\cite{leuchalee24}, we introduced a class of explicit RK-type integrators tailored to ODEs on $\mathbb{S}^2$, achieving up to third-order accuracy.  
Rather than projecting, the method exploits the explicit expression of the exponential map on the sphere, thereby ensuring that the numerical solution remains on $\mathbb{S}^2$ at every stage. To illustrate the construction, consider advancing $\mathbf{p}^n \in \mathbb{S}^2$ with velocity 
$\mathbf{s} = f(\mathbf{p}^n, t^n)$ over a time step $h>0$.  
The update is defined through the exponential map,
$\mathbf{p}^{n+1} = \exp_{\mathbf{p}^n}(h\mathbf{s})$,
where $\exp_{\mathbf{p}}: T_{\mathbf{p}}\mathbb{S}^2 \to \mathbb{S}^2$ denotes the exponential map.  
On the unit sphere, this map admits the explicit formula
$
\exp_{\mathbf{p}}(\mathbf{s})
=
\cos(\|\mathbf{s}\|)\,\mathbf{p}
+
\sin(\|\mathbf{s}\|)\,
\frac{\mathbf{s}}{\|\mathbf{s}\|}$.
This leads to the \emph{spherical forward Euler (SFE)} scheme:
\[
\text{SFE: }
\left\{
\begin{array}{l}
\mathbf{s}^n = f(\mathbf{p}^n, t^n), \\[4pt]
\mathbf{p}^{n+1}
=
\cos(h\|\mathbf{s}^n\|)\,\mathbf{p}^n
+
\sin(h\|\mathbf{s}^n\|)\,
\dfrac{\mathbf{s}^n}{\|\mathbf{s}^n\|}.
\end{array}
\right.
\]

The SFE method is the manifold analogue of classical forward Euler, with motion constrained to $\mathbb{S}^2$ through the exponential map.  
By construction, $\|\mathbf{p}^{n+1}\|=1$ exactly, so no projection step is required.  
The scheme is locally second-order accurate and globally first-order accurate in time.  
Higher-order extensions are discussed in~\cite{leuchalee24}.
\section{Our Proposal}
\label{Sec:Proposal}

In this section, we present the mathematical formulation and numerical construction of the proposed method for computing the {normalized traveltime-gradient orientation field} on the unit sphere.  
We first derive the governing equation for the {orientation field} and examine its key structural properties.  
{Based on this formulation, we develop a nonlinear iterative scheme within the Godunov-type framework to obtain a numerical solution. 
The update rule is designed to preserve the unit-norm constraint of the
orientation vector at the discrete level, so that the numerical orientation remains on the unit sphere throughout the iteration.}

\subsection{Mathematical Formulation}
\label{sec:Math_For}

{We begin by rewriting the eikonal equation in the form
$\|\nabla\phi\|^2=\alpha^2$, where $\alpha(\mathbf{x}) := 1/c(\mathbf{x})$
denotes the slowness; see, e.g., \cite{Oshfed03,Sethian1996}
for standard Hamilton--Jacobi and eikonal formulations.}
Geometrically, this equation constrains the magnitude of the traveltime gradient $\nabla \phi$ to equal the local slowness $\alpha$, but it imposes no condition on the gradient's direction.  
To explicitly account for its orientation, we introduce the decomposition $\nabla \phi = \alpha \mathbf{n}$, where $\alpha = \|\nabla \phi\|$ represents the magnitude of the gradient, as required by the eikonal equation, and $\mathbf{n} \in \mathbb{S}^{n-1}$ is a unit vector specifying the orientation of $\nabla \phi$.  
This separation into magnitude and direction allows us to treat $\mathbf{n}$ as the primary variable for tracking the directional behavior of the traveltime gradient on the sphere.  
{For a unit numerical orientation $\mathbf n_h$, the reconstructed field $\mathbf q_h=\alpha\mathbf n_h$ satisfies $\|\mathbf q_h\|=\alpha$ up to round-off. For $\mathbf q_h$ to represent a scalar gradient, it must also satisfy the compatibility condition discussed below.}

To derive an equation for $\mathbf{n}$, we differentiate the eikonal equation and substitute the above decomposition. This yields
\begin{equation}
\alpha n_1 \mathbf{n}_x + \alpha n_2 \mathbf{n}_y + \alpha n_3 \mathbf{n}_z = \mathbf{r}(\mathbf{n}),
\label{eq:eikonal-vec-nonconservative}
\end{equation}
where $\mathbf{n} = (n_1, n_2, n_3)$ and $\mathbf{r}(\mathbf{n}) = \nabla \alpha - (\mathbf{n} \cdot \nabla \alpha)\, \mathbf{n}$. Since $\mathbf{n} \in \mathbb{S}^2$, the term $\mathbf{r}(\mathbf{n})$ represents the component of the slowness gradient $\nabla \alpha$ that is orthogonal to $\mathbf{n}$, i.e., the projection of $\nabla \alpha$ onto the tangent space $T_{\mathbf{n}}(\mathbb{S}^2)$.  

{
For a $C^2$ scalar viscosity solution satisfying $\nabla\phi\ne0$,
the normalized gradient $\mathbf n=\nabla\phi/\|\nabla\phi\|$
satisfies the orientation equation. Conversely, for $\mathbf q=\alpha\mathbf n$
to be locally a scalar gradient, we require $\nabla\times\mathbf q=0$.
The steady equation gives $(\mathbf q\cdot\nabla)\mathbf q=\nabla(\alpha^2/2)$.
Together with $\|\mathbf q\|=\alpha$, the vector identity
\[
 (\mathbf q\cdot\nabla)\mathbf q
 =\nabla(\|\mathbf q\|^2/2)-\mathbf q\times(\nabla\times\mathbf q)
\]
implies $\mathbf q\times(\nabla\times\mathbf q)=0$.
In two dimensions, the curl is perpendicular to the plane. Since
$\alpha>0$, the preceding relation therefore implies local integrability.
In three dimensions, however, the curl may be nonzero and parallel to $\mathbf q$.
For example, $\alpha=1$ and $\mathbf q=(\cos z,\sin z,0)$ satisfy the steady
equation, although $\nabla\times\mathbf q=-\mathbf q$.
This example shows that satisfying the steady orientation equation does
not by itself ensure scalar-gradient compatibility. It does not satisfy
the prescribed first-arrival boundary data used here. Since the discrete
scheme imposes no curl-free constraint, we examine compatibility under
mesh refinement in Table~\ref{tab:curl_refinement}.
}

For the remainder of this section, we analyze several properties of equation~\eqref{eq:eikonal-vec-nonconservative} and demonstrate its behavior through a simple example.
To this end, we consider the time-dependent evolution equation
\begin{equation}
\mathbf{n}_t
+ \alpha n_1 \, \mathbf{n}_x
+ \alpha n_2 \, \mathbf{n}_y
+ \alpha n_3 \, \mathbf{n}_z
= \mathbf{r}(\mathbf{n}),
\label{Eqn:TimeDependentN}
\end{equation}
whose steady-state solution satisfies~\eqref{eq:eikonal-vec-nonconservative}.  
Since this is a time-evolution equation, we must ensure that $\mathbf{n}(t, \mathbf{x}) \in \mathbb{S}^2$ for all $t > 0$ so that the solution remains physically meaningful.  

Taking the inner product of~\eqref{Eqn:TimeDependentN} with $\mathbf{n}$ gives
\[
\left( \tfrac{1}{2} \|\mathbf{n}\|^2 \right)_t
+ \tfrac{1}{2} \alpha n_1 \big(\|\mathbf{n}\|^2\big)_x
+ \tfrac{1}{2} \alpha n_2 \big(\|\mathbf{n}\|^2\big)_y
+ \tfrac{1}{2} \alpha n_3 \big(\|\mathbf{n}\|^2\big)_z
+ \big(\|\mathbf{n}\|^2 - 1\big)(\mathbf{n} \cdot \nabla \alpha) = 0.
\]
This implies that if the initial condition $\mathbf{n}(t=0, \mathbf{x}) = \mathbf{n}_0(\mathbf{x})$ satisfies $\|\mathbf{n}_0(\mathbf{x})\| = 1$, then $\|\mathbf{n}(t, \mathbf{x})\| = 1$ for all $t > 0$.
Consequently, the solution remains on the unit sphere throughout the evolution.
More importantly, this relation shows that each term in~\eqref{eq:eikonal-vec-nonconservative} lies naturally in the tangent space $T_{\mathbf{n}}(\mathbb{S}^2)$.
This geometric consistency provides a fundamental guiding principle for constructing structure-preserving discretizations of spherical-valued PDEs.

{For an example that is compatible away from the source, consider the
homogeneous eikonal equation with $\alpha(\mathbf x)=1$ and
$\phi(\mathbf0)=0$.}
The analytical viscosity solution to this equation is
$\phi(\mathbf{x}) = \|\mathbf{x}\|$, which immediately implies that the traveltime gradient field is
$\nabla \phi = \frac{\mathbf{x}}{\|\mathbf{x}\|} = \mathbf{n}(\mathbf{x})$.
It is straightforward to verify that this analytical solution satisfies
equation~\eqref{Eqn:TimeDependentN} for any $\mathbf{x}$ away from the origin.
Moreover, for any constant slowness $\alpha(\mathbf{x}) = \alpha_0$, we have $\mathbf{r}(\mathbf{n}) = \mathbf{0}$,
and the same solution $\mathbf{n}(\mathbf{x}) = \mathbf{x} / \|\mathbf{x}\|$ remains an exact steady-state
solution of~\eqref{Eqn:TimeDependentN}.

For the time-dependent evolution equation~\eqref{Eqn:TimeDependentN},
the homogeneous case $\alpha(\mathbf{x}) = 1$ reduces to the inviscid Burgers equation in three dimensions,
$\mathbf{n}_t + (\mathbf{n} \cdot \nabla) \mathbf{n} = \mathbf{0}$. This equation has been extensively studied in the literature.
Its connection to the
HJ framework and the qualitative behavior of its solutions are well established
(see, e.g.,~\cite{bul92,bor16,lar25}). 
In particular, the multidimensional generalized Burgers equation
\begin{equation}
\frac{\partial \mathbf{p}}{\partial t}
+ \left[ \nabla f(\mathbf{p}) \cdot \nabla \right] \mathbf{p}
- \nu \Delta \mathbf{p} = \mathbf{0},
\label{Eqn:GeneralizedBurgers}
\end{equation}
with $\nu > 0$, serves as an important model in the study of the Navier--Stokes equations~\cite{bor16}.
If we take the curl of~\eqref{Eqn:GeneralizedBurgers} and assume a potential field
$\mathbf{p}(t=0, \mathbf{x}) = \mathbf{p}_0(\mathbf{x}) = \nabla \phi^0$, the corresponding potential
$\phi(t, \mathbf{x})$ satisfies the viscous generalized HJ equation
$\frac{\partial \phi}{\partial t} + f(\nabla \phi) - \nu \Delta \phi = 0$. For the special nonlinear case $f(\mathbf{p}) = \|\mathbf{p}\|^2 / 2$,
this reduces to the viscous time-dependent eikonal equation, which has been widely investigated in the
level-set literature for modeling dynamic interfaces~\cite{oshset88,set96,Oshfed03}.
To align with our modeling framework, one could further decompose the gradient as $\grad\phi = \alpha \mathbf{n}$ and subsequently derive a governing equation for the vector field $\mathbf{n}$.

Although equation~\eqref{Eqn:GeneralizedBurgers} shares certain superficial similarities
with our formulation~\eqref{Eqn:TimeDependentN}, the two systems are fundamentally different in structure
and constraint. 
The generalized Burgers equation governs the evolution of the full traveltime gradient field
$\mathbf{p} = \nabla \phi$, without enforcing any restriction on its magnitude,
so that $\mathbf{p} \notin \mathbb{S}^2$ in general.
In contrast, our formulation focuses solely on the \emph{orientation} component,
requiring $\|\mathbf{n}\| = 1$ at all times, while the magnitude of the gradient is
automatically determined by the slowness $\alpha(\mathbf{x})$.
{
This separation between magnitude and direction is a basic geometric
feature of the proposed sphere-valued formulation.
}

\subsection{Numerical Discretization}
\label{sec:Num_For}

Godunov upwind discretizations are widely used for Hamilton--Jacobi equations and hyperbolic conservation laws because of their stability and robustness.  However, extending these ideas to spherical-valued PDEs presents several nontrivial challenges.
In particular, the numerical updates must remain consistent with the geometric constraints $\mathbf{n} \in \mathbb{S}^2$ and $\nabla \mathbf{n} \in T_{\mathbf{n}}(\mathbb{S}^2)$, where $T_{\mathbf{n}}(\mathbb{S}^2)$ denotes the tangent space of the sphere at $\mathbf{n}$.  

{
When Euclidean component-wise upwind differences are applied to a
sphere-valued field, the resulting approximation of the spatial
derivative need not lie in the tangent space, and the update need not
preserve unit length. To address these geometric constraints, we
construct the discretization directly from the geometry of the sphere.
}

{For a spherical-valued function $\mathbf n$ mapping each spatial
point into $\mathbb S^{n-1}$, we use SLERP endpoint derivatives to
approximate spatial derivatives in the tangent space and SFE to
advance the solution on the sphere. We first establish first-order
consistency, tangency and unit-length preservation.
Propositions~\ref{prop:geodesic_equilibrium} and~\ref{prop:sfe_nonlinear_ordering}
then describe preservation of curved equilibria and the behavior of
nonlinear relaxation in a one-cell model.}

\begin{figure}[!htb]
\centering
\includegraphics[width=\textwidth]{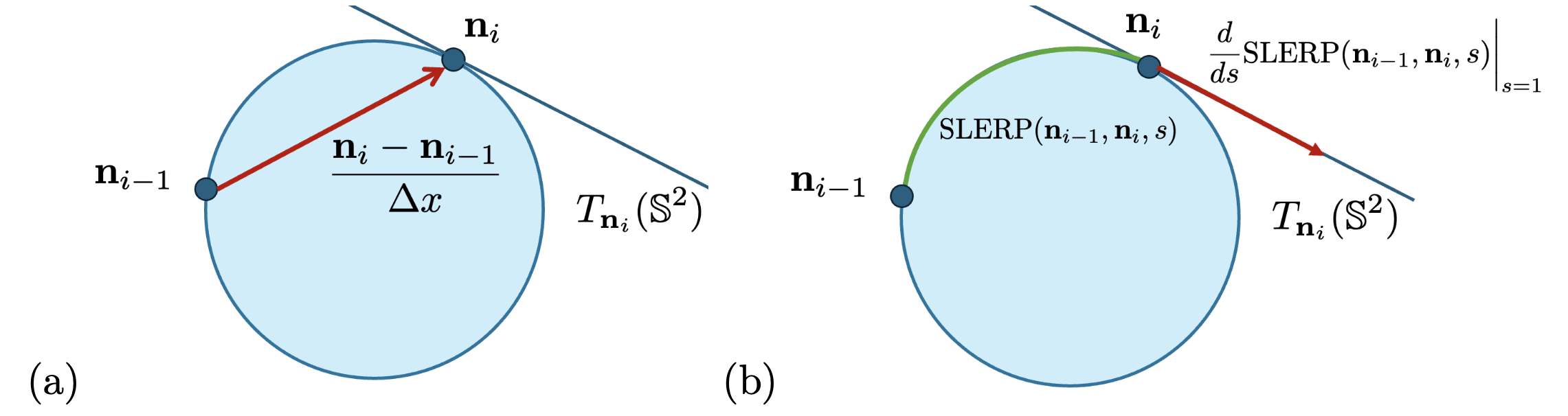}
\caption{
Numerical approximation of spatial derivatives. 
(a) Standard backward difference, illustrated by the red vector. 
(b) Proposed backward difference approximation based on spherical linear interpolation (SLERP). 
The SLERP interpolant is shown as the green curve, while the derivative at $\mathbf{n}_i$ is indicated by the red vector.
}
\label{fig:differencing}
\end{figure}

\subsubsection{The Challenges}

For simplicity, we first consider the one-dimensional case, where the solution $\mathbf{n}(x)$ is defined on the interval $x \in [0,1]$. In this setting, equation~\eqref{eq:eikonal-vec-nonconservative} reduces to $a(\mathbf{n}) \, \mathbf{n}_x = \mathbf{r}(\mathbf{n})$, where $a(\mathbf{n}) = \alpha n_1$ is the characteristic speed in the $x$-direction. Following the same idea introduced in Section~\ref{SubSec:GodunovFSMAdvectionEqn}, one might attempt to apply an upwind finite difference to approximate $\mathbf{n}_x$ and thereby derive a direct Godunov-type algebraic update for $\mathbf{n}_i$, as illustrated in Figure~\ref{fig:differencing}(a).
However, such a straightforward extension provides no guarantee that the unit-norm constraint is preserved.
The core difficulty arises because the conventional finite difference approximation does not ensure that the computed derivative lies in the tangent space $T_{\mathbf{n}}(\mathbb{S}^2)$.
{
As a result, component-wise updates do not, in general, preserve the
unit-sphere constraint.
}

To address this issue, we recall that the standard forward or backward difference formula can be interpreted as the derivative of a \emph{linear interpolant} between adjacent data points.  
Motivated by this observation, we propose to construct the analogous interpolant on $\mathbb{S}^2$ (specifically, the SLERP) and differentiate it to obtain a geometrically consistent finite difference approximation.  
The derivative of SLERP naturally represents the infinitesimal rate of change along a great-circle geodesic on $\mathbb{S}^2$, serving as the spherical counterpart of the linear finite difference in Euclidean space.

In particular, to interpolate two spherical data points $\mathbf{p}_a, \mathbf{p}_b \in \mathbb{S}^2$, we employ the SLERP formula~\cite{shoemake_85,sola}. 
Although a quaternion-based implementation~\cite{ham63} can be computationally more efficient, here we adopt its geometric form expressed via trigonometric functions:
\[
\mathrm{SLERP}(\mathbf{p}_a, \mathbf{p}_b, s)
= \frac{\sin[(1-s)\Omega]}{\sin \Omega}\, \mathbf{p}_a
+ \frac{\sin[s\Omega]}{\sin \Omega}\, \mathbf{p}_b, 
\qquad s \in [0,1],
\]
where $\cos \Omega = \mathbf{p}_a \cdot \mathbf{p}_b$ defines the angular distance between the two points along the great-circle arc. Building upon this formulation, we introduce a \emph{structure-preserving Godunov-type scheme} for equation~\eqref{eq:eikonal-vec-nonconservative}:
\begin{equation}
\label{eq:Godunov_vector_update}
\frac{a_i^+}{\Delta x} 
\frac{d}{ds} \mathrm{SLERP}(\mathbf{n}_{i-1}, \mathbf{n}_i, s)\bigg|_{s=1}
+ 
\frac{a_i^-}{\Delta x} 
\frac{d}{ds} \mathrm{SLERP}(\mathbf{n}_i, \mathbf{n}_{i+1}, s)\bigg|_{s=0}
= \mathbf{r}(\mathbf{n}_i).
\end{equation}
The two terms on the left-hand side represent geometrically consistent backward and forward difference operators for $\mathbb{S}^2$-valued functions, respectively, ensuring that all intermediate computations remain within the tangent space of the sphere.  

For instance, the derivatives of the SLERP interpolants can be written explicitly as
\begin{eqnarray*}
\frac{d}{ds} \mathrm{SLERP}(\mathbf{n}_{i-1}, \mathbf{n}_i, s)\bigg|_{s=1}
&=& \frac{\Omega_{i-1}}{\sin \Omega_{i-1}}
\left[ \cos \Omega_{i-1}\, \mathbf{n}_i - \mathbf{n}_{i-1} \right], \\[6pt]
\frac{d}{ds} \mathrm{SLERP}(\mathbf{n}_i, \mathbf{n}_{i+1}, s)\bigg|_{s=0}
&=& \frac{\Omega_{i+1}}{\sin \Omega_{i+1}}
\left[ \mathbf{n}_{i+1} - \cos \Omega_{i+1}\, \mathbf{n}_i \right],
\end{eqnarray*}
where $\cos \Omega_{i\pm1} = \mathbf{n}_i \cdot \mathbf{n}_{i\pm1}$.
Because SLERP interpolants always remain on $\mathbb{S}^2$, their derivatives are automatically confined to the tangent space $T_{\mathbf{n}_i}(\mathbb{S}^2)$, as illustrated in Figure~\ref{fig:differencing}(b).
{
Thus, the spatial derivative approximation in
\eqref{eq:Godunov_vector_update} retains the tangency that a conventional
component-wise finite difference generally loses.
}

{Equation~\eqref{eq:Godunov_vector_update} discretizes the orientation
equation directly. Its relation to projected componentwise differences
is analyzed in Section~\ref{sec:projection_analysis}.}

Based on the discretization~\eqref{eq:Godunov_vector_update}, one might obtain the following update formula:
\[
\mathbf{n}_i^{k+1} =
\left( 
\frac{a_i^+ \, \Omega_{i-1}^*}{\tan \Omega_{i-1}^*}
-
\frac{a_i^- \, \Omega_{i+1}^*}{\tan \Omega_{i+1}^*}
\right)^{-1}
\left[
\Delta x \, \mathbf{r}(\mathbf{n}_i^k)
+ 
\frac{a_i^+ \, \Omega_{i-1}^*}{\sin \Omega_{i-1}^*} \, \mathbf{n}_{i-1}^*
-
\frac{a_i^- \, \Omega_{i+1}^*}{\sin \Omega_{i+1}^*} \, \mathbf{n}_{i+1}^*
\right],
\]
where $\cos \Omega_{i\pm1}^* = \mathbf{n}_i^k \cdot \mathbf{n}_{i\pm1}^*$.

Although this iterative expression appears straightforward, it does not guarantee preservation of the unit-norm constraint $\mathbf{n}_i^{k+1} \in \mathbb{S}^2$.  
To illustrate this issue, consider the simple case $\alpha(x) = 1$ with the point-source boundary condition of the eikonal equation imposed at the leftmost point $x_0 = 0$.  
The exact solution of~\eqref{eq:eikonal-vec-nonconservative} is then $\mathbf{n}(x) = \mathbf{e}_1 = (1,0,0)$.  
Assume $a_i > 0$ so that $a_i^- = 0$, and let the latest updated value $\mathbf{n}_{i-1}^{k+1}$ satisfy $\|\mathbf{n}_{i-1}^{k+1}\| = 1$.  
Since $\mathbf{r}(x) = \nabla \alpha - (\mathbf{n}_i^k \cdot \nabla \alpha)\mathbf{n}_i^k = \mathbf{0}$ for any $\mathbf{n}_i^k$,  
the norm of the next iterate is given by
$\|\mathbf{n}_i^{k+1}\|^2 = {\sec^2 \Omega_{i-1}^{k+1}}$,
where $\cos \Omega_{i-1}^{k+1} = \mathbf{n}_i^k \cdot \mathbf{n}_{i-1}^{k+1} \neq 1$
unless $\mathbf{n}_i^k$ is already aligned with $\mathbf{n}_{i-1}^{k+1}$.
Consequently, ${\|\mathbf{n}_i^{k+1}\|>1}$ in general, indicating that the direct algebraic update above does not preserve the unit norm during intermediate updates. {Here $0<\Omega_{i-1}^{k+1}<\pi/2$; the formula is singular at $\pi/2$. This example concerns the algebraic alternative, not SFE.}
A straightforward remedy to maintain the unit-norm constraint would be to apply an explicit projection operator after each update step. 
{For a tangent Euler step, normalization also yields an update
along a great circle, but with a different angular step. We examine this
time-integration effect separately from the spatial approximation in
Section~\ref{sec:projection_analysis}.}

\subsubsection{Our Update Formula}

{To construct an intrinsic structure-preserving iteration}, we instead introduce an artificial time variable, following the pseudo-time viewpoint described in Section~\ref{SubSec:GodunovFSMAdvectionEqn}.  
The temporal derivative is discretized using the spherical forward Euler (SFE) method~\cite{leuchalee24}, which advances the solution along a geodesic curve on $\mathbb{S}^2$ via the exponential map.  
This leads to the following \emph{structure-preserving iterative scheme}:
\begin{equation}
\label{eq:Godunov_vector_update_final}
\begin{split}
\mathbf{s}_i &= \mathbf{r}(\mathbf{n}_i^k)
- \frac{a_i^+}{\Delta x}
\frac{d}{ds}\,\mathrm{SLERP}(\mathbf{n}_{i-1}^k, \mathbf{n}_i^k, s)\Big|_{s=1}
- \frac{a_i^-}{\Delta x}
\frac{d}{ds}\,\mathrm{SLERP}(\mathbf{n}_i^k, \mathbf{n}_{i+1}^k, s)\Big|_{s=0}, \\[4pt]
\mathbf{n}_i^{k+1} &= 
\cos(h\|\mathbf{s}_i\|)\,\mathbf{n}_i^k
+ 
\sin(h\|\mathbf{s}_i\|)\,\frac{\mathbf{s}_i}{\|\mathbf{s}_i\|},
\end{split}
\end{equation}
where $h = \Delta x / \max_i |a_i|$ and $a_i = \alpha_i n_1^k(x_i)$ in the one-dimensional setting.  
In the reported experiments, we take a fixed CFL number below one unless otherwise stated.  
This formulation describes a discrete evolution on the unit sphere that preserves the geometric structure of the solution.

It is important to note that both derivatives of the SLERP in~\eqref{eq:Godunov_vector_update_final} lie naturally on the tangent plane $T_{\mathbf{n}_i^k}(\mathbb{S}^2)$ at the current iterate $\mathbf{n}_i^k$.  
In addition, the term $\mathbf{r}(\mathbf{n}_i^k)$ is defined as the orthogonal projection of the local gradient $\nabla \alpha$ onto the tangent plane at $\mathbf{n}_i^k$, that is, the component of $\nabla \alpha$ orthogonal to $\mathbf{n}_i^k$, ensuring that $\mathbf{r}(\mathbf{n}_i^k) \in T_{\mathbf{n}_i^k}(\mathbb{S}^2)$.  
Consequently, every component contributing to $\mathbf{s}_i$ (namely, $\mathbf{r}(\mathbf{n}_i^k)$ and the two SLERP derivative terms) resides in the tangent space, so their linear combination $\mathbf{s}_i$ also lies in $T_{\mathbf{n}_i^k}(\mathbb{S}^2)$.  
This property is crucial because it guarantees that the update direction is always tangent to the unit sphere, satisfying the necessary condition for an evolution constrained to $\mathbb{S}^2$.  
Since the new iterate $\mathbf{n}_i^{k+1}$ is then obtained via the exponential map along this tangent direction, the unit-norm condition $\|\mathbf{n}_i^{k+1}\|=1$ is automatically maintained throughout the pseudo-time iteration.

\subsubsection{Consistency and Structure Preservation}\label{sec:consistency}
{The SLERP derivative is first-order consistent for smooth
sphere-valued fields, and the SFE update preserves the unit-sphere
constraint exactly. We establish these properties below.}

Let $\mathbf{p},\mathbf{q}\in\mathbb{S}^{2}$ with $\mathbf{p}\neq -\mathbf{q}$, and let $\Omega=\arccos(\mathbf{p}\cdot \mathbf{q})$.  
The SLERP curve $\boldsymbol{\gamma}(s)=\operatorname{SLERP}(\mathbf{p},\mathbf{q},s)$ is the unit-speed great-circle interpolation after reparametrization by the angular distance.  
Its endpoint derivative satisfies
\[
    \boldsymbol{\gamma}'(1)=\frac{\Omega}{\sin\Omega}\big(\cos\Omega\,\mathbf{q}-\mathbf{p}\big)\in T_{\mathbf{q}}\mathbb{S}^{2}, 
    \qquad
    \boldsymbol{\gamma}'(0)=\frac{\Omega}{\sin\Omega}\big(\mathbf{q}-\cos\Omega\,\mathbf{p}\big)\in T_{\mathbf{p}}\mathbb{S}^{2}.
\]
Moreover, if $\mathbf{q}=\mathbf{p}+\Delta x\,\mathbf{v}+O(\Delta x^2)$ for a smooth curve $\mathbf{p}(x)\in\mathbb{S}^{2}$ with $\mathbf{v}=\mathbf{p}_x\in T_{\mathbf{p}}\mathbb{S}^{2}$, then
\[
    \frac{1}{\Delta x}\frac{d}{ds}\operatorname{SLERP}(\mathbf{p},\mathbf{q},s)\Big|_{s=0}
    = \mathbf{v}+O(\Delta x).
\]
The same estimate holds for the backward derivative.  
Thus the proposed SLERP difference is a first-order consistent approximation of the spatial derivative of a smooth sphere-valued field and remains tangent to the sphere at the base point.

Since $\mathbf{r}(\mathbf{n})=\nabla \alpha-(\mathbf{n}\cdot\nabla\alpha)\mathbf{n}$ is the orthogonal projection of $\nabla\alpha$ onto $T_{\mathbf{n}}\mathbb{S}^2$, every term in $\mathbf{s}_i$ in \eqref{eq:Godunov_vector_update_final} lies in $T_{\mathbf{n}_i^k}\mathbb{S}^2$.  
The SFE update is exactly the Riemannian exponential map on the unit sphere, and hence
\[
    \|\mathbf{n}_i^{k+1}\|^2
    =\left\|\cos(h\|\mathbf{s}_i\|)\mathbf{n}_i^k
      +\sin(h\|\mathbf{s}_i\|)\frac{\mathbf{s}_i}{\|\mathbf{s}_i\|}\right\|^2=1,
\]
because $\mathbf{n}_i^k\cdot \mathbf{s}_i=0$.  
Therefore, up to round-off error, the iteration preserves the unit-sphere constraint without projection.  
This property is the discrete analogue of the continuous identity obtained by taking the inner product of \eqref{Eqn:TimeDependentN} with $\mathbf{n}$.

{These results show that the discretization is first-order consistent in space and
pseudo-time and preserves the unit-norm constraint exactly. Although a general convergence theorem is not established here, we examine the convergence behavior of the nonlinear relaxation numerically in Section~\ref{sec:Num_Exp}.}

\subsubsection{Causality and Relation to the Viscosity Solution}\label{sec:causality}

{The spatial operator uses the signs of the local characteristic
velocities to select upwind SLERP derivatives. In smooth regions, these
velocities follow the characteristic directions of the eikonal equation.
The resulting sphere-valued pseudo-time relaxation therefore retains
characteristic upwinding, although it does not inherit the scalar
ordering and comparison principle of a Godunov traveltime update.}

{To select the desired first-arrival branch, we prescribe compatible
data and compute the orientation in smooth regions. Source singularities,
shocks, caustics and characteristic crossings require separate treatment,
as the scalar traveltime may not have a single classical gradient there.
The following subsections describe the initialization and
source-neighborhood treatment used in our computations.}

\subsubsection{Initial Condition}\label{sec:initial}
The initialization procedure for the proposed scheme differs substantially from that of the scalar fast sweeping method. 
In the standard scalar Godunov fast sweeping method, all unknown grid points are typically initialized with a large numerical value, allowing monotone updates to propagate information outward from the boundary until convergence.
In contrast, the present spherical formulation evolves directional information rather than magnitudes.
Because the solution vector $\mathbf{n}$ represents the local characteristic or flow direction, its initial configuration must already be reasonably aligned with the expected physical field.

{
For the point-source tests, the corresponding homogeneous point-source
field provides a radially outward initial guess that agrees with the
expected first-arrival direction near the source. Neighboring vectors
also usually have similar directions, which is useful for the SLERP
construction: interpolation is not unique for an antipodal pair
$\mathbf p=-\mathbf q$, and its endpoint derivative becomes poorly
conditioned for nearly antipodal vectors.

Throughout the pseudo-time iteration, we keep directions prescribed by
the physical boundary condition or local point-source model fixed.
More generally, when no analytical interior direction is available, we
initialize from the homogeneous problem with the same boundary
condition. Where this choice is unavailable, the normalized numerical
gradient of a coarse scalar eikonal solution offers an alternative
initial guess away from singular regions.
}

\subsubsection{Point Source Condition} 

In the conventional eikonal formulation of the traveltime field, imposing the point-source boundary condition is straightforward: the traveltime is set to zero at the source location, and the scalar fast sweeping iterations naturally propagate this information across the domain in accordance with the wavefront's causality.  
In contrast, for the {orientation equation} considered in this work, the solution $\mathbf{n}$ is undefined at the point source because all characteristics (rays) emanate from that location, making the orientation field singular.  
To handle this singularity, it is necessary to design an appropriate boundary condition that accurately represents the local geometry of the wavefront near the source.

\begin{figure}[!htb]
\centering
\includegraphics[width=\textwidth]{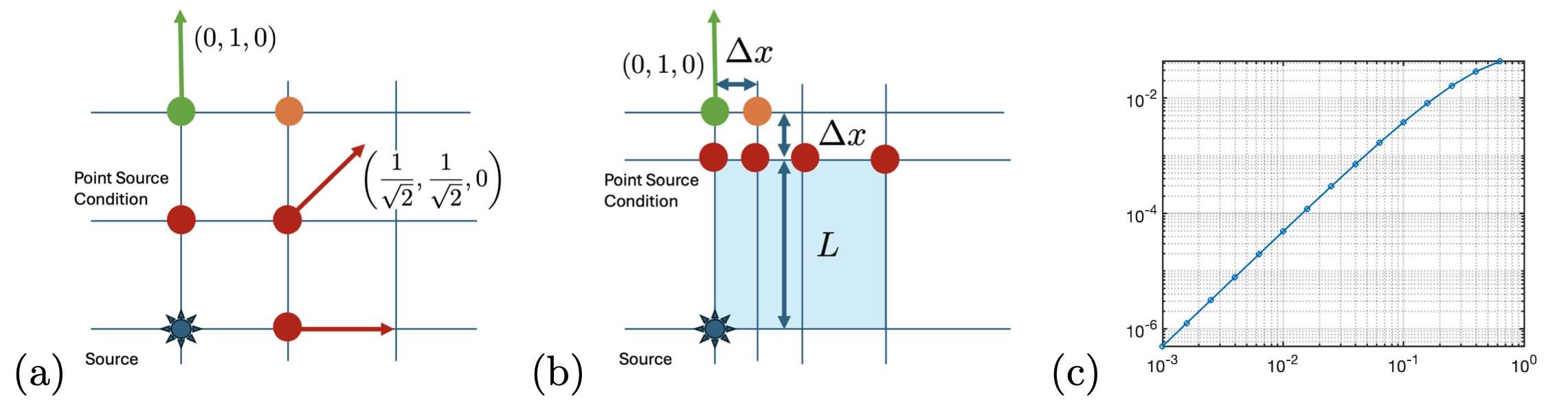}
\caption{Analysis of boundary condition treatments near the point source.  
(a) Initialization by assigning boundary conditions to the eight neighboring grid points surrounding the source.  
(b) Alternative approach imposing the point-source boundary condition within a small neighborhood $\Omega'$ around the source.  
(c) Numerical error $\|\mathbf{n}_{i,j} - \mathbf{n}_{\text{exact}}\|$ versus the non-dimensional grid spacing $h = \Delta x / L$, demonstrating first-order convergence.}
\label{fig:boundaryconditionanalysis}
\end{figure}

One practical approach to handling the source singularity is to exclude the point source itself from the computational domain and to prescribe the orientation field on its immediate neighbors.  
Specifically, we initialize the eight surrounding grid points according to $\mathbf{n}(\mathbf{x}_i) = \frac{\mathbf{x}_i}{\|\mathbf{x}_i\|}$,
which corresponds to assuming a locally homogeneous slowness distribution near the source (see Figure~\ref{fig:boundaryconditionanalysis}(a)).  
This initialization reflects the physical intuition that, in the immediate vicinity of the source, the wavefront expands isotropically, and thus the orientation vectors should radiate outward from the source.  
Because of the imposed local symmetry, the computed orientations along the coordinate axes are exact, for example, the green grid point yields $\mathbf{n} = (0,1,0)$, while along the diagonal direction (the red dot at the corner on the first ring), the exact solution is $\mathbf{n} = (\tfrac{1}{\sqrt{2}}, \tfrac{1}{\sqrt{2}}, 0)$.  
This setup provides an easily implemented approximation for initializing the iterative solver. 
To assess the numerical accuracy of this boundary prescription, we analyze the value at the orange grid point labeled $(i,j)$, which is determined using the two-dimensional extension of the SLERP-based steady discretization~\eqref{eq:Godunov_vector_update}.  
At this grid point, we seek a solution $(n_1, n_2, 0)$ that satisfies
\begin{equation}
n_1 \frac{\Omega_{i-1,j}}{\sin\Omega_{i-1,j}}
  \big( \cos \Omega_{i-1,j} \,\mathbf{n}_{i,j} - \mathbf{n}_{i-1,j} \big)
+ n_2 \frac{\Omega_{i,j-1}}{\sin\Omega_{i,j-1}}
  \big( \cos \Omega_{i,j-1} \,\mathbf{n}_{i,j} - \mathbf{n}_{i,j-1} \big)
= \mathbf{0},
\label{Eqn:PointSourceAnalysis}
\end{equation}
where $\cos \Omega_{i-1,j} = n_2$ and $\cos \Omega_{i,j-1} = \tfrac{1}{\sqrt{2}}(n_1 + n_2)$.  
Enforcing the unit-norm condition $n_2 = \sqrt{1 - n_1^2}$ leads to a root-finding problem of a single nonlinear function in $n_1$. 
Solving this numerically yields $n_1 = 0.484292917004127$ and $n_2 = 0.874905920965011$.  
The corresponding exact solution is $\mathbf{n}_{\text{exact}} = (\tfrac{1}{\sqrt{5}}, \tfrac{2}{\sqrt{5}}, 0)$, resulting in a deviation $\|\mathbf{n}_{i,j} - \mathbf{n}_{\text{exact}}\|_2 = 0.04190413$, which is unfortunately \textbf{independent of the grid size} $\Delta x$.
This analysis reveals an important limitation of the simple initialization strategy: the imposed boundary condition introduces a persistent, grid-independent discrepancy near the source.  
Consequently, the resulting numerical error does not diminish under mesh refinement, implying that this boundary treatment cannot achieve asymptotic convergence.  

Instead, we propose a more consistent boundary treatment by approximating the velocity model locally as homogeneous within a small neighborhood of the source.  
Specifically, we set $\alpha_0 = \alpha(\mathbf{x}_s)$ on a fixed subdomain 
$\Omega' = \{\mathbf{x} : \|\mathbf{x} - \mathbf{x}_s\|_{\infty} < L\}$ 
surrounding the point source.  
On the boundary $\partial \Omega'$ of this region, we impose the exact analytical boundary condition
$
\mathbf{n}_{\text{exact}} = 
\frac{\mathbf{x} - \mathbf{x}_s}{\|\mathbf{x} - \mathbf{x}_s\|_2},
$
which corresponds to a locally homogeneous eikonal solution with spherical wavefronts emanating from the source (see Figure~\ref{fig:boundaryconditionanalysis}(b)).  
This setup eliminates the singularity at the source and provides a physically consistent initialization for the numerical scheme.

To analyze the accuracy of this boundary construction, we consider the homogeneous case and examine convergence of the computed solution near the edge of $\Omega'$.  
For the grid point immediately one layer outside $\partial \Omega'$ above the source, the solution is exact due to the symmetry of both the geometry and the numerical stencil.  
Let $(x_i, y_j)$ denote the neighboring grid point on the right-hand side of this location (the orange dot in Figure~\ref{fig:boundaryconditionanalysis}(b)), and define the non-dimensionalized grid spacing $h = \Delta x / L$.  
The exact analytical solution at this grid point is given by $\mathbf{n}_{\text{exact}} = \frac{1}{\sqrt{h^2 + (1 + h)^2}} \,(h,\, 1+h,\, 0)$.
Numerically, the corresponding value computed by our scheme satisfies equation~\eqref{Eqn:PointSourceAnalysis} with 
$n_2 = \sqrt{1 - n_1^2}$, $\mathbf{n}_{i,j-1}=(1 + h^2)^{-1/2}(h,1,0)$, 
$\cos \Omega_{i-1,j} = (1 + h^2)^{-1/2}$, and 
$\cos \Omega_{i,j-1} = (1 + h^2)^{-1/2}(n_1 h + n_2)$.  
For each $h$, we solve the resulting nonlinear equation for $n_1$ and compare the numerical and exact solutions.  
Figure~\ref{fig:boundaryconditionanalysis}(c) shows the error $\|\mathbf{n}_{i,j} - \mathbf{n}_{\text{exact}}\|$ as a function of $h$, demonstrating a clear convergence as the grid is refined.  

This analysis confirms that the proposed localized homogeneous model yields a \emph{consistent and convergent} boundary condition for the point-source initialization.  
Unlike the direct eight-neighbor prescription discussed earlier, this approach eliminates the grid-independent error near the source and ensures that the numerical solution converges systematically with mesh refinement.

\subsubsection{Boundary Condition} 

Near the outer boundaries of the computational domain, equation~\eqref{eq:Godunov_vector_update_final} requires data outside the grid range.  
To handle this, the upwind differencing is modified to a one-sided form at the endpoints.  
Specifically, at the left and right boundaries ($i = 1$ and $i = m$), the velocity vectors are computed as
\begin{equation}
\label{eq:velocity_vector_at_boundary}
\begin{split}
\mathbf{s}_1 &= \mathbf{r}(\mathbf{n}_1^k)
+ \frac{|a_1|}{\Delta x}
\frac{d}{ds}\,\mathrm{SLERP}(\mathbf{n}_1^k, \mathbf{n}_2^k, s)\Big|_{s=0}, \\[4pt]
\mathbf{s}_m &= \mathbf{r}(\mathbf{n}_m^k)
- \frac{|a_m|}{\Delta x}
\frac{d}{ds}\,\mathrm{SLERP}(\mathbf{n}_{m-1}^k, \mathbf{n}_m^k, s)\Big|_{s=1},
\end{split}
\end{equation}
where the computational grid is defined as 
$x_{\min} = x_1 < x_2 < \cdots < x_m = x_{\max}$.  
This boundary treatment preserves the one-sided upwind information flow and maintains the spherical constraint at the domain edges.  

\subsubsection{Convergence Criterion}

In classical iterative solvers for scalar steady equations, convergence is assessed by monitoring the change between successive iterates of the scalar solution.  

{
For the controlled comparisons, we monitor the discrete steady
residual directly:
\[
 R^k=\max_{(i,j)\in\mathcal I_a}\|\mathbf s_{i,j}^k\|_2<\epsilon,
\]
where $\mathcal I_a$ contains the unfixed nodes and $\mathbf s^k$ is the
SLERP tangent residual evaluated from the complete field $\mathbf n^k$.
This stopping rule is used in Algorithm~\ref{alg:spfsm} and the controlled comparisons in
Sections~\ref{sec:projection_experiments}, \ref{sec:scalar_curved_cost}, \ref{sec:projection_3d_comparison} and \ref{sec:runtime_scaling}, with their
stated residual tolerances. The homogeneous, waveguide,
sinusoidal and three-dimensional model runs instead use the successive-iterate
stopping rule, with the corresponding histories labeled as update norms.

To relate the successive-iterate stopping rule to the residual criterion above, recall from the SFE update in~\eqref{eq:Godunov_vector_update_final} that the update moves $\mathbf{n}_{i,j}^k$ along a great circle by an angle $h\|\mathbf{s}_{i,j}^k\|_2$. Hence,
\begin{equation*}
\left\|
\mathbf{n}_{i,j}^{k+1}
-
\mathbf{n}_{i,j}^{k}
\right\|_2
=
2
\left|
\sin\left(
\frac{
h\|\mathbf{s}_{i,j}^{k}\|_2
}{2}
\right)
\right|.
\end{equation*}
For sufficiently small pseudo-time steps,
\begin{equation*}
\left\|
\mathbf{n}_{i,j}^{k+1}
-
\mathbf{n}_{i,j}^{k}
\right\|_2
=
h\|\mathbf{s}_{i,j}^{k}\|_2
+
O\left(
h^3\|\mathbf{s}_{i,j}^{k}\|_2^3
\right).
\end{equation*}
Thus, the successive-iterate norm provides a useful indication of convergence toward a discrete steady state. However, since the pseudo-time step $h$ depends on the mesh spacing, a fixed tolerance on the update norm does not correspond to the same residual tolerance on different meshes.
}

\subsubsection{Algorithm and Computational Considerations}\label{sec:cost}
For completeness, we summarize the complete two-dimensional implementation in Algorithm~\ref{alg:spfsm}.  
The three-dimensional version is obtained by adding the corresponding $z$-direction SLERP derivatives.

\begin{algorithm}[htbp]
\caption{Structure-preserving pseudo-time method for the orientation field}
\label{alg:spfsm}
\begin{algorithmic}[1]
\STATE Prescribe boundary or source-neighborhood values of $\mathbf{n}$.
\STATE Initialize all remaining grid values by a physically consistent orientation field.
\FOR{$k=0,1,2,\ldots$ until convergence}
        \FOR{each unfixed grid point $(i,j)$}
            \STATE Evaluate the upwind coefficients $a_{i,j}^{\pm}$ and $b_{i,j}^{\pm}$ from $\alpha_{i,j}\mathbf{n}_{i,j}^k$.
            \STATE Compute the SLERP endpoint derivatives in the selected upwind directions.
            \STATE Form the tangent vector $\mathbf{s}_{i,j}$ by the multidimensional analogue of \eqref{eq:Godunov_vector_update_final}.
            \STATE Choose the pseudo-time step $h$ from the multidimensional CFL-type bound.
            \STATE Update $\mathbf{n}_{i,j}^{k+1}=\exp_{\mathbf{n}_{i,j}^{k}}(h\mathbf{s}_{i,j})$ using the SFE formula.
        \ENDFOR
    \STATE {Recompute $\mathbf s^{k+1}$ from the complete new field and stop if
    $\max_{(i,j)\in\mathcal I_a}\|\mathbf s_{i,j}^{k+1}\|_2<\epsilon$.}
\ENDFOR
\end{algorithmic}
\end{algorithm}

{
For a grid with $N$ points, each pseudo-time iteration of the proposed method requires $O(N)$ local operations.
}  
{Each local update evaluates dot products and trigonometric factors
for the SLERP derivative and exponential map.}
In practice, the formulas are evaluated with their Taylor expansions when the angular distance is small, for example $\Omega/\sin\Omega=1+O(\Omega^2)$, to avoid loss of accuracy and unnecessary cost.  When $\Omega$ is below a prescribed tolerance, we replace the SLERP derivative by its tangent linearization.  
The antipodal case $\mathbf{p}=-\mathbf{q}$ is not unique for SLERP.  The method therefore assumes that neighboring directions are not antipodal; in the implementation, nearly antipodal neighboring pairs are treated as an indication that the current grid resolution or initialization is inadequate near a directional discontinuity.

{
The total work for $K_N$ iterations is $O(NK_N)$, so the overall cost
also depends on how the iteration count grows under mesh refinement.
With $N=O(\Delta x^{-d})$, an iteration bound $K_N=O(\Delta x^{-1})$
would give $O(N^{1+1/d})$ work. In Section~\ref{sec:runtime_scaling},
we examine this conditional scaling estimate using measured iteration
counts and total runtimes. Costs on fixed meshes and at prescribed
accuracy are compared in Sections~\ref{sec:projection_experiments},
\ref{sec:scalar_curved_cost} and~\ref{sec:projection_3d_comparison}.

}

{
\subsection{Analysis of the Comparison Methods}
\label{sec:projection_analysis}

We introduce comparison methods to examine the spatial reconstruction
and the sphere-preserving time update separately. We write
$P_{\mathbf n}=I-\mathbf n\mathbf n^T$ and let $D_j^{\rm up}$ denote the
componentwise one-sided difference selected by the sign of $\alpha n_j$.
We denote the corresponding SLERP endpoint derivative divided by the mesh
spacing by $L_j^{\rm up}$. The tangent spatial residuals are then given by
\begin{equation}\label{eq:projection_residuals}
\begin{split}
 F_{\rm proj}(\mathbf n)&=\mathbf r(\mathbf n)
       -\alpha\sum_{j=1}^d n_j P_{\mathbf n}D_j^{\rm up}\mathbf n,\\
 F_{\rm S}(\mathbf n)&=\mathbf r(\mathbf n)
       -\alpha\sum_{j=1}^d n_j L_j^{\rm up}\mathbf n.
\end{split}
\end{equation}
We name each method by its spatial residual (FD, PFD or SLERP) and
time update, normalized Euler (NE) or spherical forward Euler (SFE).
FD uses ambient componentwise differences, whereas PFD projects their residual
onto the tangent space before applying the NE normalization.
The paired comparisons in Table~\ref{tab:projection_variants} isolate
the effects of residual projection (FD--NE/PFD--NE), SLERP derivatives
(PFD--NE/SLERP--NE), and the time update (SLERP--NE/SLERP--SFE).
These comparisons concern direct orientation solvers; we introduce
the scalar-recovery workflows separately below. We denote the pseudo-time
step by $\Delta\tau$ (called $h$ in~\eqref{eq:Godunov_vector_update_final})
and the uniform spatial spacing by $\Delta x$.

\begin{table}[!htb]
\centering\small

\begin{tabular}{@{}l p{0.47\textwidth} p{0.33\textwidth}@{}}
\toprule
Method & Spatial residual & Pseudo-time update\\
\midrule
FD--NE & Componentwise upwind advection with $\mathbf r(\mathbf n)$;
    no projection of the full residual
  & Ambient Euler followed by normalization\\
PFD--NE & Componentwise upwind, projected tangent: $F_{\rm proj}$
  & Euler followed by normalization\\
SLERP--NE & Present SLERP derivatives: $F_{\rm S}$
  & Euler followed by normalization\\
SLERP--SFE & Present SLERP derivatives: $F_{\rm S}$
  & Present spherical forward Euler\\
\bottomrule
\end{tabular}
\caption{Controlled comparison of the spatial residual and pseudo-time update.
FD--NE, PFD--NE, and SLERP--NE are comparison baselines; SLERP--SFE is the proposed method.
All four methods preserve unit length up to round-off error.}
\label{tab:projection_variants}
\end{table}

\subsubsection{Spatial discretization.}
Method FD--NE uses $G=\mathbf r-\alpha\sum_j n_jD_j^{\rm up}\mathbf n$, with
$P_{\mathbf n}G=F_{\rm proj}$. If $1+\Delta\tau\,\mathbf n\cdot G>0$, its
normalized update is equivalent to normalized Euler applied to $F_{\rm proj}$ with
effective step $\Delta\tau/(1+\Delta\tau\,\mathbf n\cdot G)$.
Thus, in this regime, FD--NE and PFD--NE have the same steady
equation with zero tangential residual, although their iteration histories can differ.

To compare PFD--NE with SLERP--NE, consider unit vectors $\mathbf n,\mathbf q$
with angular separation $\Omega<\pi$,
\begin{equation}\label{eq:chord_log_comparison}
 \log_{\mathbf n}(\mathbf q)
 =\frac{\Omega}{\sin\Omega}\,P_{\mathbf n}(\mathbf q-\mathbf n).
\end{equation}
At $\Omega=0$, we use the limiting expression. Geometrically, projected
componentwise differences measure the tangent component of a chord,
whereas SLERP derivatives also include its angular correction. For smooth fields with
$\Omega=O(\Delta x)$, the two derivative approximations differ by
$O((\Delta x)^2)$. Both methods are generally first order; however,
the angular correction allows exact preservation of the nonconstant
equilibria described in the following proposition.

\begin{proposition}[Preservation of a geodesic equilibrium]
\label{prop:geodesic_equilibrium}
Let $\mathbf a,\mathbf b\in\R^d$ be orthonormal, $k\ne0$, and
$u=\mathbf a\cdot\mathbf x$, $v=\mathbf b\cdot\mathbf x$,
$\theta=\theta_0+ku$. Define
\begin{equation}\label{eq:geodesic_family}
 \alpha=e^{kv},\qquad
 \phi=\frac{e^{kv}}{k}\sin\theta,\qquad
 \mathbf n=\cos\theta\,\mathbf a+\sin\theta\,\mathbf b.
\end{equation}
Then $\nabla\phi=\alpha\mathbf n$. On a Cartesian mesh with spacings
$\Delta x_j$, the SLERP spatial residual vanishes on this sampled field
wherever the required one-sided neighbors are available, provided
$|ka_j\Delta x_j|<\pi$ and $\nabla\alpha$ is evaluated analytically.
Method SLERP--NE and the proposed method both preserve this equilibrium in exact
arithmetic when the prescribed boundary data are consistent with it.
\end{proposition}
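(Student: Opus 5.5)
The plan is to verify the continuous identity directly, then show that every one-sided SLERP derivative reproduces the exact derivative of $\mathbf n$, and finally conclude that the residual vanishes so both pseudo-time updates are stationary.

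\emph{Step 1 (continuous identity).} Since $\nabla u=\mathbf a$, $\nabla v=\mathbf b$ and $\nabla\theta=k\mathbf a$, differentiation gives
\[
\nabla\phi=\frac{e^{kv}}{k}\cos\theta\,k\mathbf a+e^{kv}\sin\theta\,\mathbf b=\alpha\mathbf n .
\]
I will also introduce the rotated unit vector $\mathbf t=-\sin\theta\,\mathbf a+\cos\theta\,\mathbf b\in T_{\mathbf n}\mathbb S^{d-1}$. Two identities will be used repeatedly: $\mathbf b=\sin\theta\,\mathbf n+\cos\theta\,\mathbf t$ and $\mathbf n\cdot\mathbf a=\cos\theta$.

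\emph{Step 2 (exactness of SLERP differences).} The key observation is that $\mathbf n$ depends on $\mathbf x$ only through $\theta$. Moving from a node to a neighbor $\mathbf x\pm\Delta x_j\mathbf e_j$ changes $\theta$ by $\pm\delta_j$, where $\delta_j=ka_j\Delta x_j$. The neighbor is therefore the rotation of $\mathbf n$ by angle $\pm\delta_j$ in the plane $\mathrm{span}\{\mathbf a,\mathbf b\}$, namely $\cos\delta_j\,\mathbf n\pm\sin\delta_j\,\mathbf t$.

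Because $|\delta_j|<\pi$, the angular separation is $\Omega=|\delta_j|<\pi$. So the pair is not antipodal and the great-circle arc is the short rotation itself. Using \eqref{eq:chord_log_comparison}:
\begin{itemize}
\item the forward endpoint derivative at $s=0$ equals $\log_{\mathbf n}(\mathbf q)=\delta_j\mathbf t$;
\item the backward endpoint derivative at $s=1$ equals $-\log_{\mathbf n}(\mathbf q_{\rm prev})=\delta_j\mathbf t$.
\end{itemize}
Hence $L_j^{\rm up}\mathbf n=ka_j\mathbf t$ exactly, whichever side the sign of $\alpha n_j$ selects.

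I expect this to be the main point requiring care. I must check that the sign of $\delta_j$ is absorbed correctly, since $\sin\Omega$ and $\Omega$ involve $|\delta_j|$ while the rotation direction involves $\operatorname{sign}\delta_j$; the ratio $\Omega/\sin\Omega$ is even, which resolves this. I must also cover the case $a_j=0$, where the limiting expression gives $0$.

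\emph{Step 3 (cancellation).} The advection term becomes
\[
\alpha\sum_j n_jka_j\mathbf t=\alpha k(\mathbf n\cdot\mathbf a)\mathbf t=\alpha k\cos\theta\,\mathbf t .
\]
For the source term, $\nabla\alpha=k\alpha\mathbf b$ analytically, so
\[
\mathbf r(\mathbf n)=k\alpha\big(\mathbf b-(\mathbf n\cdot\mathbf b)\mathbf n\big)=k\alpha\cos\theta\,\mathbf t .
\]
Thus $F_{\rm S}(\mathbf n)=0$ at every node where the one-sided neighbors exist. This also covers the one-sided boundary formulas \eqref{eq:velocity_vector_at_boundary}, since those use the same SLERP derivatives.

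\emph{Step 4 (stationarity of both iterations).} With the residual equal to zero, both updates leave each node unchanged:
\begin{itemize}
\item the SFE update reduces to $\exp_{\mathbf n}(\mathbf 0)=\mathbf n$, using the limiting form when $\|\mathbf s\|=0$;
\item normalized Euler gives $\mathbf n/\|\mathbf n\|=\mathbf n$.
\end{itemize}
The fixed nodes carry consistent data, so they agree with the sampled field. By induction on the iteration index $k$, every iterate of SLERP--NE and of SLERP--SFE equals the sampled equilibrium in exact arithmetic.
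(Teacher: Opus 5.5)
Your proposal is correct and follows the paper's proof essentially step for step. It uses the same tangent vector $\mathbf t$, the same observation that each grid link rotates $\mathbf n$ by the signed angle $ka_j\Delta x_j$ so that either one-sided SLERP derivative equals $ka_j\mathbf t$ exactly, and the same cancellation $\alpha k(\mathbf n\cdot\mathbf a)\mathbf t=\mathbf r(\mathbf n)$; your checks on the sign, the case $a_j=0$, and stationarity of both updates fill in details the paper leaves implicit. One small caveat: the one-sided formulas \eqref{eq:velocity_vector_at_boundary} carry the factor $|a|$, so they reduce to the upwind residual only at outflow nodes. This matches the paper's setting, since inflow nodes are fixed, but your boundary remark should be stated with that qualification.
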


\begin{proof}
Set $\mathbf t=-\sin\theta\,\mathbf a+\cos\theta\,\mathbf b$.
Direct differentiation gives $\partial_j\mathbf n=ka_j\mathbf t$ and
$\mathbf r(\mathbf n)=k\alpha\cos\theta\,\mathbf t$.
The signed angular increment on a grid link is $ka_j\Delta x_j$.
Its SLERP endpoint derivative, divided by $\Delta x_j$, is exactly
$ka_j\mathbf t$, for either one-sided stencil under the stated angle
condition. Hence
$\alpha\sum_j n_jL_j^{\rm up}\mathbf n
=k\alpha(\mathbf n\cdot\mathbf a)\mathbf t
=\mathbf r(\mathbf n)$, proving the assertion.
\end{proof}

For the coordinate-aligned two-dimensional case,
$\mathbf a=(1,0)$, $\mathbf b=(0,1)$, the projected chord derivative is
$[\sin(k\Delta x)/\Delta x]\mathbf t$ instead of $k\mathbf t$. Thus the
sampled exact field has the nonzero equilibrium defect
\begin{equation}\label{eq:geodesic_chord_defect}
 F_{\rm proj}(\mathbf n)
 =\alpha\cos\theta\left(k-\frac{\sin(k\Delta x)}{\Delta x}\right)\mathbf t
 =O((\Delta x)^2).
\end{equation}

{
The defect in~\eqref{eq:geodesic_chord_defect} explains the background
bias retained by the projected-chord discretization at convergence.
To assess its significance for nearby fields,
Section~\ref{sec:perturbation_resolution} compares this bias with the
physical directional change induced by perturbing the equilibrium.
}

\subsubsection{Pseudo-time update.}
For a unit vector $\mathbf n$ and a nonzero tangent vector $\mathbf s$, put
$z=\Delta\tau\|\mathbf s\|$ and $\mathbf t=\mathbf s/\|\mathbf s\|$.
Normalized Euler (NE) and SFE advance along the same great circle:
\begin{equation}\label{eq:normalized_euler_angle}
\begin{split}
 \mathbf n_{\rm NE}^{+}
 &=\frac{\mathbf n+z\mathbf t}{\sqrt{1+z^2}}
   =\cos(\arctan z)\mathbf n+\sin(\arctan z)\mathbf t,\\
 \mathbf n_{\rm SFE}^{+}
 &=\cos z\,\mathbf n+\sin z\,\mathbf t,\\
 \|\mathbf n_{\rm SFE}^{+}-\mathbf n_{\rm NE}^{+}\|
 &=2\left|\sin\!\left(\frac{z-\arctan z}{2}\right)\right|
   =\frac{z^3}{3}+O(z^5).
\end{split}
\end{equation}
Both updates preserve the sphere and leave every zero of their common
residual unchanged. For fixed $\Delta\tau$, their tangent-space linearization at a
zero-residual equilibrium $\mathbf n_*$ is
$I+\Delta\tau\,D F_{\rm S}(\mathbf n_*)$, whenever $F_{\rm S}$ is
differentiable or the upwind branch is fixed. Consequently, both updates have
the same local linear stability tests and asymptotic linear factors in two and
three dimensions. Their nonlinear transients can nevertheless differ,
as we illustrate in the following reduction.

\emph{A conditional nonlinear advantage of SFE.}
To compare the nonlinear angular updates explicitly, we consider a
reduction of the same spatial residual.
On $[0,\Delta x]^2$, we take $\alpha(y)=e^{ky}$, $k\ge0$, with zero
incoming traveltime at $y=0$. The exact first-arrival solution is
$\phi(y)=(e^{ky}-1)/k$, with limit $\phi=y$ at $k=0$, and
$\mathbf n_*=(0,1)$. Vertical paths attain the optical-length lower
bound given by the change in $\phi$. We use a $2\times2$ grid, fix the
two bottom orientations to $\mathbf n_*$, and initialize both top
nodes at $(\sin e_0,\cos e_0)$, where $0<e_0<\pi/2$.
The two top orientations remain equal. Writing
$\mathbf n=(\sin e,\cos e)$, $\mathbf t=(\cos e,-\sin e)$,
$\beta=k\Delta x$, and $\mu=\Delta\tau e^{k\Delta x}/\Delta x$, the
unchanged SLERP residual then reduces to
\begin{equation}\label{eq:sfe_angular_residual}
 F_{\rm S}(\mathbf n)
 =-\frac{e^{k\Delta x}}{\Delta x}g_\beta(e)\mathbf t,
 \qquad g_\beta(e)=\beta\sin e+e\cos e.
\end{equation}
Indeed, the horizontal derivative vanishes, the logarithm to the bottom
neighbor is $-e\mathbf t$, and
$\mathbf r=-ke^{k\Delta x}\sin e\,\mathbf t$.
For the proposed SFE update and the normalized Euler update of method SLERP--NE,
the angular iteration maps are
\begin{equation}\label{eq:sfe_angular_maps}
 T_{\rm SFE}(e)=e-\mu g_\beta(e),\qquad
 T_{\rm NE}(e)=e-\arctan\!\bigl(\mu g_\beta(e)\bigr).
\end{equation}

\begin{proposition}[Angular-error ordering for a one-cell relaxation]
\label{prop:sfe_nonlinear_ordering}
For the data above, if $0<\mu(\beta+1)\le1$, the iterates of
\eqref{eq:sfe_angular_maps} from the same $e_0$ satisfy
$0\le e_{\rm SFE}^m<e_{\rm NE}^m\le e_0$ for every $m\ge1$
in exact arithmetic. Both sequences decrease to zero. Thus SFE takes
no more iterations to reach any prescribed angular-error threshold.
\end{proposition}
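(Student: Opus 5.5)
We treat \eqref{eq:sfe_angular_maps} as two scalar iterations on $[0,e_0]$ and compare them via monotonicity. Set $G(e)=\mu g_\beta(e)$. On $[0,\pi/2]$ we have $g_\beta(e)=\beta\sin e+e\cos e\ge0$, with equality only at $e=0$, and
\[
g_\beta'(e)=(\beta+1)\cos e-e\sin e\le\beta+1 .
\]
Hence $0\le G'(e)\le\mu(\beta+1)\le1$ wherever $g_\beta'\ge0$.

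\textbf{Step 1: invariance and monotonicity of $T_{\rm SFE}$.} Since $g_\beta(e)\le(\beta+1)e$, for $e\in[0,\pi/2)$ we get $T_{\rm SFE}(e)=e-G(e)\ge e\bigl(1-\mu(\beta+1)\bigr)\ge0$. Also $T_{\rm SFE}(e)<e$ for $e>0$, so $[0,e_0]$ is invariant.

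For order preservation we need $T_{\rm SFE}'=1-G'\ge0$. The upper bound $G'\le\mu(\beta+1)\le1$ gives this. The sign of $G'$ does not matter, since a negative $G'$ only makes $1-G'$ larger.

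\textbf{Step 2: $T_{\rm NE}$ on the same interval.} Since $\arctan$ is increasing, $T_{\rm NE}(e)\le e$, with strict inequality for $e>0$. Since $\arctan x<x$ for $x>0$, we get $T_{\rm NE}(e)>T_{\rm SFE}(e)\ge0$ for $e>0$. Moreover
\[
T_{\rm NE}'=1-\frac{G'}{1+G^2}\ge 1-G'\ge0,
\]
so $T_{\rm NE}$ is also increasing on $[0,e_0]$.

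\textbf{Step 3: induction.} At $m=1$ the base case is the pointwise inequality $0\le T_{\rm SFE}(e_0)<T_{\rm NE}(e_0)\le e_0$. Suppose $0\le e_{\rm SFE}^m<e_{\rm NE}^m\le e_0$. If $e_{\rm SFE}^m>0$, then
\[
e_{\rm SFE}^{m+1}=T_{\rm SFE}(e_{\rm SFE}^m)\le T_{\rm SFE}(e_{\rm NE}^m)<T_{\rm NE}(e_{\rm NE}^m)=e_{\rm NE}^{m+1}.
\]
The first inequality uses monotonicity of $T_{\rm SFE}$; the second uses the strict pointwise inequality, valid because $e_{\rm NE}^m>0$. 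If $e_{\rm SFE}^m=0$, then $e_{\rm SFE}^{m+1}=0<e_{\rm NE}^{m+1}$, because $e_{\rm NE}$ never reaches zero: $T_{\rm NE}(e)>0$ for $e>0$.

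Both sequences are nonincreasing and bounded below by zero, so they converge. Their limits are fixed points of continuous maps, and the only fixed point of either map in $[0,\pi/2)$ is $0$, since $g_\beta>0$ there. The ordering then gives $e_{\rm SFE}^m<e_{\rm NE}^m$ at every step. So whenever $e_{\rm NE}^m$ falls below a threshold, $e_{\rm SFE}^m$ already has, and SFE needs no more iterations.

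\textbf{Main obstacle.} The delicate point is monotonicity of $T_{\rm SFE}$ (Step 1), since SFE, unlike NE, could overshoot past zero. This is exactly where the hypothesis $\mu(\beta+1)\le1$ is used. Two routine checks remain: the derivative bound $g_\beta'\le\beta+1$, and the fact that iterates stay in $[0,\pi/2)$, where $g_\beta\ge0$.
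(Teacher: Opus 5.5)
Your proof is correct and follows essentially the same route as the paper: the bounds $0\le g_\beta(e)\le(\beta+1)e$ and $\arctan z<z$, then monotonicity of $T_{\rm SFE}$ from $T_{\rm SFE}'=1-\mu[(\beta+1)\cos e-e\sin e]\ge 0$, then the induction chain $T_{\rm SFE}(e_{\rm SFE}^m)\le T_{\rm SFE}(e_{\rm NE}^m)<T_{\rm NE}(e_{\rm NE}^m)$, and finally identifying both limits as the only zero of $g_\beta$ in $[0,e_0]$. One harmless slip: your inequality $T_{\rm NE}'\ge 1-G'$ fails where $G'=\mu g_\beta'<0$, although there $T_{\rm NE}'>1$ anyway, and monotonicity of $T_{\rm NE}$ is never used in the induction.
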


\begin{proof}
For $0\le e\le\pi/2$,
$0\le g_\beta(e)\le(\beta+1)e$, so
$0\le T_{\rm SFE}(e)\le T_{\rm NE}(e)\le e$.
For $0<e<\pi/2$, $g_\beta(e)>0$ and
$T_{\rm SFE}(e)<T_{\rm NE}(e)<e$ because $\arctan z<z$ for $z>0$.
Moreover, $T_{\rm SFE}'(e)=1-\mu[(\beta+1)\cos e-e\sin e]\ge0$
under the stated step condition. Monotonicity and induction give the
strict ordering.
Both decreasing sequences have limits, and the only zero of
$g_\beta$ in $[0,e_0]$ is zero.
\end{proof}

Both maps have derivative $1-\mu(\beta+1)$ at zero. At
$\mu=1/(\beta+1)$ this common linear factor vanishes, but
\begin{equation}\label{eq:sfe_cubic_constants}
 T_{\rm SFE}(e)=\frac{\beta+3}{6(\beta+1)}e^3+O(e^5),\qquad
 T_{\rm NE}(e)=\frac{3\beta+5}{6(\beta+1)}e^3+O(e^5).
\end{equation}
For $\beta=3$, the leading coefficients are $1/4$ for SFE and $7/12$
for NE. Thus, the iteration converges cubically at the critical step,
although both time integrators remain first order. Since $g_\beta$ need
not be monotone, the ordering established in the proposition applies to
angular errors rather than residuals. In Section~\ref{sec:sfe_nonlinear_experiment},
we examine this nonlinear angular-error behavior numerically.

\par}

\section{Numerical Experiment}
\label{sec:Num_Exp}

{The experiments examine equilibrium preservation, directional
accuracy and computational cost in two and three dimensions. We begin
with a curved orientation equilibrium and signed perturbations about
it, then isolate the pseudo-time update. Comparisons with scalar-first
recovery examine orientation accuracy.
Waveguide, sinusoidal and three-dimensional models extend the tests to
other media, followed by a computational-scaling study.}
We omit the one-dimensional test case, as the vector field $\mathbf{n}$ becomes trivial under the unit-length constraint, taking only the values $\pm 1$.

{
In the following experiments, we denote the proposed method by SLERP--SFE
and compare it with the orientation controls defined in
Table~\ref{tab:projection_variants}. 
We introduce the scalar-first comparison methods in
Section~\ref{sec:scalar_recovery_comparison}.
}

\FloatBarrier
{
\subsection{Projection Ablation: Spatial Preservation and Nonlinear Convergence}
\label{sec:projection_experiments}

We compare FD--NE, PFD--NE, SLERP--NE, and SLERP--SFE as defined in
Table~\ref{tab:projection_variants}. Unless stated otherwise, the
computations use the same medium and initial and boundary data for
all methods, with the pseudo-time step given by
\begin{equation}\label{eq:ablation_timestep}
 \Delta\tau=\frac{0.9}{(\max_i\alpha_i)\sum_{j=1}^d1/\Delta x_j}.
\end{equation}

\subsubsection{Spatial Discretization in Two Dimensions}

To examine the preservation property established in
Proposition~\ref{prop:geodesic_equilibrium}, we consider the domain
$[0,1]^2$ in scaled coordinates. We take $k=1.2$ and $\theta_0=0.15$
and use the coordinate-aligned field~\eqref{eq:geodesic_family}, given by
\begin{equation}\label{eq:graded_2d}
 \alpha(x,y)=e^{ky},\quad c(x,y)=e^{-ky},\quad
 \phi(x,y)=\frac{e^{ky}}{k}\sin(\theta_0+kx),\quad
 \mathbf n=(\cos(\theta_0+kx),\sin(\theta_0+kx)).
\end{equation}
In this example, angles are measured from the positive $x$-axis.
We prescribe the exact incoming orientations on $x=0$ and $y=0$
and apply one-sided outgoing stencils on the remaining edges.
At every unfixed node, the initial orientation is the nonexact
constant direction $(\cos\theta_0,\sin\theta_0)$. Since both exact
components are positive, the backward characteristics reach the
incoming boundary. The relation $\nabla\phi=\alpha\mathbf n$ implies
that the optical length bounds the change in $\phi$, with equality
along these characteristics. Thus, the corresponding incoming
traveltimes select the first-arrival solution.

For the graded-medium tests, the iteration is terminated when the
maximum tangential residual reaches $10^{-10}$. We apply the same
criterion to all methods, changing only the monitoring rule.
This allows us to compare the methods independently of the update
norms associated with different steps. To evaluate the accuracy,
we use the RMS angular error
\begin{equation}\label{eq:ablation_rms_angle}
 E_{\rm ang}=\left(\frac{1}{N_a}\sum_{i\in\mathcal I_a}
   \operatorname{atan2}\!\left(
    \|\mathbf n_i^*-(\widehat{\mathbf n}_i\cdot\mathbf n_i^*)
                  \widehat{\mathbf n}_i\|,
    \widehat{\mathbf n}_i\cdot\mathbf n_i^*\right)^2\right)^{1/2},
\end{equation}
where $\mathcal I_a$ contains the $N_a$ unfixed nodes including outgoing
boundary nodes, $\mathbf n_i^*$ is the exact unit orientation, and
$\widehat{\mathbf n}_i=\mathbf n_i/\|\mathbf n_i\|$ is used only to
evaluate this diagnostic. All angles are in radians.

We next examine how the methods behave away from the geodesic family
by applying signed perturbations to the preserved equilibrium.
Specifically, we use
\begin{equation}\label{eq:graded_perturbation}
 f_\varepsilon(x)=\frac{\sin(\theta_0+kx)}{k}
            +\frac{\varepsilon\sin(2\pi x)}{2\pi},\qquad
 \phi_\varepsilon=e^{ky}f_\varepsilon(x),\qquad
 \alpha_\varepsilon=\|\nabla\phi_\varepsilon\|,
\end{equation}
with exact orientation $\nabla\phi_\varepsilon/\alpha_\varepsilon$ and
analytic $\nabla\alpha_\varepsilon$; no balancing term is added.
We test $\varepsilon=0,\allowbreak\pm0.001,\allowbreak\pm0.005,\allowbreak\pm0.05$ on
$\Delta x=0.1,0.05,0.025,0.0125$, adding $\Delta x=0.00625$ for
$\varepsilon=-0.001,0,0.001$. 
Figure~\ref{fig:graded_orientation_fields}(a,b) compares the fields for
$\varepsilon=0,0.05$; panel (c) shows the signed angle changes for
$\varepsilon=\pm0.05$ along $y=0.5$. Reference rays satisfy $d\mathbf X/ds=\mathbf n^*(\mathbf X)$
with arc length $s$ and the analytic orientation $\mathbf n^*$.
Here $\theta_\varepsilon^{\rm prop}$ denotes the computed angle
from the positive $x$-axis. These changes describe the response to the
medium; errors relative to each exact field appear in
Figure~\ref{fig:projection_equilibrium}.

\begin{figure}[!htb]
\centering

\includegraphics[width=\textwidth]{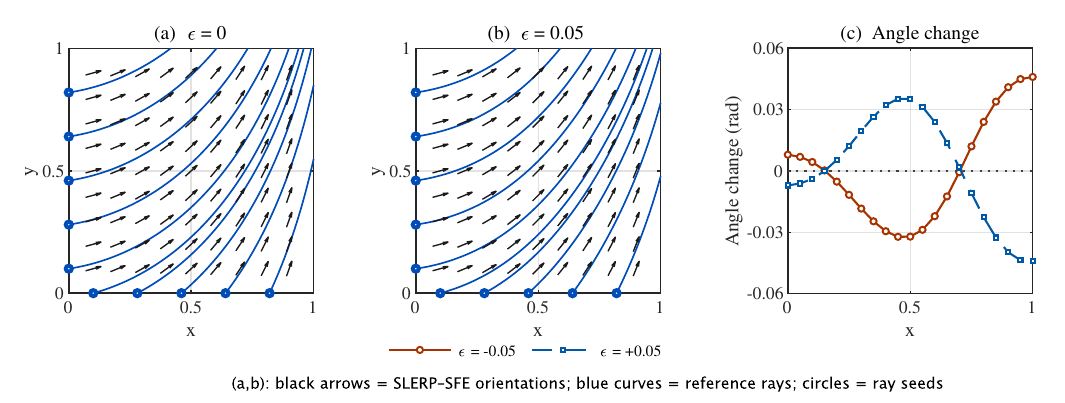}
\caption{Computed orientations and medium-induced angle changes for the
two-dimensional family~\eqref{eq:graded_perturbation}, with $k=1.2$,
$\theta_0=0.15$, $\Delta x=0.05$, analytic $\nabla\alpha_\varepsilon$,
and residual tolerance $10^{-10}$.
(a,b) Computed orientation fields for $\varepsilon=0,0.05$, respectively. Black arrows show
the computed orientations of the proposed method at every second interior grid node,
while blue curves are reference characteristics
integrated from the exact orientation field, starting at the circles
on $x=0$ and $y=0$ and ending at the outgoing boundary. 
(c) Signed angle changes for $\varepsilon=-0.05,+0.05$ relative to the
computed $\varepsilon=0$ field along $y=0.5$. The dotted line represents the unperturbed case $\varepsilon=0$. This compares the responses in different media rather than errors relative to the exact solution.}
\label{fig:graded_orientation_fields}
\end{figure}
\FloatBarrier

Figure~\ref{fig:projection_equilibrium}(a) confirms equilibrium
preservation: SLERP--NE and SLERP--SFE reach the exact field to the
stopping accuracy, while FD--NE and PFD--NE reach a nearby discrete
equilibrium. At $\Delta x=0.1$, FD--NE and PFD--NE each require
205 iterations, taking 3.571 and 3.750 ms, respectively; SLERP--NE
and SLERP--SFE each require 204 iterations, taking 5.562 and 6.441 ms.
The SLERP residual evaluated on the exact field remains below
$10^{-13}$ for $\Delta x=0.00625,0.0125,0.025,0.05,0.1$.
We also start from the exact field and take 100 steps. SLERP--NE
and SLERP--SFE preserve it to round-off, whereas FD--NE and PFD--NE
move away from it.

\begin{figure}[!htb]
\centering

\includegraphics[width=\textwidth]{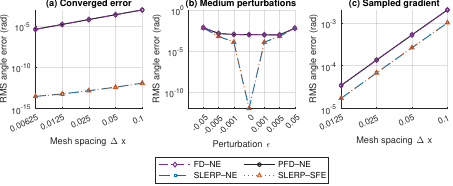}
\caption{Spatial comparison for the graded medium, with
$k=1.2$, $\theta_0=0.15$, and residual tolerance $10^{-10}$.
(a) RMS angular error after solving from nonexact constant data.
(b) Signed perturbations~\eqref{eq:graded_perturbation} at $\Delta x=0.1$;
the horizontal positions enumerate the tested values.
(c) Errors with the same centered-difference approximation of
$\nabla\alpha$ supplied to every method. The curves for FD--NE and PFD--NE nearly
coincide, as do those for SLERP--NE and the proposed method. Tolerance-level
errors in (a) do not define convergence rates.}
\label{fig:projection_equilibrium}
\end{figure}

{
\paragraph{Resolution of small directional changes.}
\label{sec:perturbation_resolution}
Let $\theta_\varepsilon^*$ and $\theta_\varepsilon^h$ be the exact
and computed angles, $\theta_{\rm eq}=\theta_0+kx$, and $\theta_0^h$
the computed unperturbed angle. On the unfixed nodes, the physical
change has RMS size $S_\varepsilon=
\|\theta_\varepsilon^*-\theta_{\rm eq}\|_{\rm RMS}$ for
$\varepsilon\ne0$. Define $R_{\rm abs}=E_{\rm ang}/S_\varepsilon$ and
the paired-response ratio
\begin{equation}\label{eq:perturbation_resolution}
 R_{\rm pair}=
 \frac{\|(\theta_\varepsilon^h-\theta_0^h)
       -(\theta_\varepsilon^*-\theta_{\rm eq})\|_{\rm RMS}}
      {S_\varepsilon}.
\end{equation}
For $\Delta x=0.1$, the perturbations $\varepsilon=\pm0.001$ produce
a physical change of $S_\varepsilon\simeq5.42\times10^{-4}$ rad.
For the negative and positive signs, FD--NE/PFD--NE give
$R_{\rm abs}=1.983,1.799$, while both SLERP methods give
$0.234,0.233$. The paired ratios for the componentwise and SLERP
methods are $R_{\rm pair}\simeq0.237$ and $0.234$, respectively.
Thus, subtracting each computed background cancels much of the
componentwise bias. These results show that SLERP improves the
absolute orientation accuracy near the equilibrium by suppressing
this background error.
}

At the $10^{-4}$-rad target, the SLERP methods require less
computational time than FD--NE and PFD--NE for both signs of the
small perturbation (Table~\ref{tab:projection_cost}). The comparison
changes for larger perturbations: $\varepsilon=-0.05$ favors SLERP,
whereas at $+0.05$ PFD--NE is slightly more accurate. For
$\varepsilon=+0.001$, imposing the stricter $10^{-5}$ target
requires every method to use the tested $\Delta x=0.00625$ grid.
In this case, FD--NE and PFD--NE have lower computational costs.

We next replace the analytic $\nabla\alpha$ in the unperturbed
example by the same centered difference of sampled slowness for
all methods. At the edges, we supply exact slowness samples one
mesh interval outside the domain. The resulting derivative is
$\partial_y^{\Delta x}\alpha=\alpha\sinh(k\Delta x)/\Delta x$,
which introduces a medium-derivative error into the equilibrium
balance. For $\Delta x=0.1,0.05,0.025,0.0125$, the ratio of the
PFD--NE and SLERP--SFE angular errors is
$2.004,2.001,2.0002,2.0001$, respectively. Thus, SLERP reduces the
angular error by approximately a factor of two in this
sampled-derivative test.

\begin{table}[!htb]
\centering\footnotesize

\setlength{\tabcolsep}{2pt}
\begin{minipage}[t]{0.485\textwidth}
\centering
\begin{tabular*}{\linewidth}{@{\extracolsep{\fill}}lrrr@{}}
\toprule
Method & Selected $\Delta x$ & $E_{\rm ang}$ (rad) & Time (s)\\
\midrule
\multicolumn{4}{@{}l@{}}{Analytic, $\varepsilon=-0.001$}\\
\addlinespace[2pt]
FD--NE & 0.025 & $8.879982\times10^{-5}$ & 0.04735\\
PFD--NE & 0.025 & $8.879982\times10^{-5}$ & 0.05046\\
SLERP--NE & 0.05 & $6.649442\times10^{-5}$ & 0.01733\\
SLERP--SFE & 0.05 & $6.649442\times10^{-5}$ & 0.02283\\
\addlinespace[5pt]
\multicolumn{4}{@{}l@{}}{Analytic, $\varepsilon=0$}\\
\addlinespace[2pt]
FD--NE & 0.025 & $6.804875\times10^{-5}$ & 0.05339\\
PFD--NE & 0.025 & $6.804875\times10^{-5}$ & 0.05609\\
SLERP--NE & 0.1 & $1.208209\times10^{-12}$ & 0.00556\\
SLERP--SFE & 0.1 & $1.207781\times10^{-12}$ & 0.00644\\
\bottomrule
\end{tabular*}
\end{minipage}\hfill
\begin{minipage}[t]{0.485\textwidth}
\centering
\begin{tabular*}{\linewidth}{@{\extracolsep{\fill}}lrrr@{}}
\toprule
Method & Selected $\Delta x$ & $E_{\rm ang}$ (rad) & Time (s)\\
\midrule
\multicolumn{4}{@{}l@{}}{Analytic, $\varepsilon=+0.001$}\\
\addlinespace[2pt]
FD--NE & 0.025 & $6.156770\times10^{-5}$ & 0.04991\\
PFD--NE & 0.025 & $6.156770\times10^{-5}$ & 0.05143\\
SLERP--NE & 0.05 & $6.639515\times10^{-5}$ & 0.01764\\
SLERP--SFE & 0.05 & $6.639515\times10^{-5}$ & 0.01946\\
\addlinespace[5pt]
\multicolumn{4}{@{}l@{}}{Sampled, $\varepsilon=0$}\\
\addlinespace[2pt]
FD--NE & 0.0125 & $3.455010\times10^{-5}$ & 0.28255\\
PFD--NE & 0.0125 & $3.455010\times10^{-5}$ & 0.29193\\
SLERP--NE & 0.025 & $6.803883\times10^{-5}$ & 0.07849\\
SLERP--SFE & 0.025 & $6.803883\times10^{-5}$ & 0.09183\\
\bottomrule
\end{tabular*}
\end{minipage}
\caption{Cost at an RMS angular-error target of $10^{-4}$ rad.
Each row selects the least median solver time among that method's
tested qualifying grids, so resolutions differ. Times are medians of
three repetitions, with the common step~\eqref{eq:ablation_timestep}
and residual tolerance $10^{-10}$. These are measured costs at a specified
accuracy, not same-grid speedups or certified optimal mesh choices.}
\label{tab:projection_cost}
\end{table}

For sampled $\nabla\alpha$, Table~\ref{tab:projection_cost}
shows that SLERP--SFE reaches $10^{-4}$ rad with $\Delta x=0.025$,
while FD--NE and PFD--NE require $\Delta x=0.0125$. Consequently,
SLERP--SFE is 3.08 and 3.18 times faster, respectively, and
SLERP--NE requires still less time. The savings come from reaching
the target accuracy on a coarser mesh, not from cheaper updates.

\FloatBarrier
\subsubsection{Time Update}
\label{sec:sfe_nonlinear_experiment}

We next compare SLERP--NE and SLERP--SFE for the relaxation
described in Proposition~\ref{prop:sfe_nonlinear_ordering}.
We consider the domain $[0,1]^2$ with $\alpha=e^{ky}$ and analytic
$\nabla\alpha$. The incoming orientation is prescribed as $(0,1)$
on $y=0$, and the original outgoing stencils are retained.
At all free nodes, the initial orientation is $(\sin e_0,\cos e_0)$.
The two methods use the same SLERP residual and a constant step given by
\begin{equation}\label{eq:sfe_relaxation_step}
 \Delta\tau=\frac{\nu\Delta x}{2e^k},\qquad 0<\nu<1,
\end{equation}
where $\nu$ denotes the CFL coefficient. For $\Delta x=1$ and $k>1$,
we choose $\nu=2/(k+1)$ to obtain the critical
step~\eqref{eq:sfe_cubic_constants} from the linearization,
independently of $e_0$. In the primary example, we take $k=3$,
$e_0=80^\circ$, $\nu=0.5$ and a residual tolerance of $10^{-10}$.
The $2\times2$ grid contains two free nodes and directly realizes
the one-cell nonlinear mechanism of the proposition.

The vector iteration agrees with the independent angular
recurrence~\eqref{eq:sfe_angular_maps} to within $3\times10^{-14}$
in vector norm. After three iterations, the angular errors for
SLERP--NE and SLERP--SFE are $2.298792\times10^{-3}$ and
$3.505346\times10^{-5}$ rad, respectively, a reduction by about
a factor of 66. The methods reach the common residual tolerance
in five and four iterations, respectively, and converge to the
same exact equilibrium.

\par}
\FloatBarrier

\FloatBarrier
{
\subsection{Numerical Convergence and Scalar-First Gradient Recovery}
\label{sec:scalar_recovery_comparison}

{
For comparison with the proposed orientation solver, we first solve
the scalar eikonal equation for the numerical traveltime $\phi_h$
and then recover its gradient by finite differences.
Specifically, we use first-order Godunov or Lax-Friedrichs (LF)
scalar solvers and apply either centered (CD) or upwind (UD)
differences for gradient recovery~\cite{benluozha10,leuqia0601}.
The four resulting scalar-first workflows are denoted by
Godunov-CD, Godunov-UD, LF-CD, and LF-UD. Let
\begin{equation}
\mathbf g_h = D\phi_h,
\qquad
\mathbf n_h^{S}
=
\frac{\mathbf g_h}{|\mathbf g_h|},
\qquad
\mathbf q_h^{S}
=
\alpha \mathbf n_h^{S},
\label{eq:scalar_magnitude_correction}
\end{equation}
whenever $\mathbf g_h\neq 0$, where $D$ denotes either centered
(CD) or upwind (UD) differencing. In this construction,
$\mathbf g_h$ is the gradient recovered directly from the
numerical traveltime, and normalization gives its direction
$\mathbf n_h^{S}$. We then obtain the reconstructed field
$\mathbf q_h^{S}$ by replacing the numerical gradient magnitude
with the prescribed slowness $\alpha$, while retaining the
recovered orientation. It follows that
$|\mathbf q_h^{S}-\alpha\mathbf n|
=
\alpha
|\mathbf n_h^{S}-\mathbf n|.$
This magnitude correction allows the directional errors of the scalar-first and direct orientation methods to be compared on the same basis.

We first examine the homogeneous point-source problem and then use the curved graded family to compare directional accuracy and computational cost at a prescribed accuracy.
}

\subsubsection{Homogeneous Point-Source Test}
\label{SubSec:2DConvergence}

We begin with the homogeneous eikonal equation $\|\nabla \phi(\mathbf{x})\| = 1$, subject to the point-source boundary condition $\phi(\mathbf{x}_0) = 0$, where $\mathbf{x}_0 = (0.5, 0.5)$.  
In this case, the vector field $\mathbf{n}$ represents the normalized gradient of $\phi$, corresponding to a unit direction field radiating outward from the source point $\mathbf{x}_0$.  
The governing equation for $\mathbf{n}$ follows from~\eqref{eq:eikonal-vec-nonconservative} with a constant slowness $\alpha(\mathbf{x}) = 1$. To prescribe the point-source boundary condition, we initialize $\mathbf{n}$ according to the known direction of the characteristic flow that emanates from the source.  
Specifically, for grid points in the neighborhood $0.48 \leq x, y \leq 0.52$, we set
$\mathbf{n}(x, y) = \frac{1}{r(x, y)} (x - x_0,\, y - y_0)$ where $r(\mathbf{x}) = \|\mathbf{x} - \mathbf{x}_0\|$. For all other grid points, the initialization is given by the analytical solution of the homogeneous eikonal equation, providing a smooth and physically consistent starting field for the iteration.  
These values are then iteratively updated by {the proposed orientation-equation method (SLERP--SFE)} until convergence.  
Simulations are performed on uniform grids with stepsize $dx = dy = 0.02,0.01,0.005,0.0025,0.00125$ and $0.000625$. $\epsilon$ is taken to be $10^{-10}$. The exact solution is given by $\tilde{\mathbf{n}}(x,y)
= \frac{1}{r(x,y)} (x - x_0,\; y - y_0)$.

To begin, we revisit the convergence behavior of conventional scalar fast sweeping methods for the eikonal equation.  
{The Godunov and Lax--Friedrichs solvers and their recovery labels are
defined at the start of Section~\ref{sec:scalar_recovery_comparison}.}
Their accuracy is assessed using the error metric
$E_{\text{traveltime},{\infty}}
    = \max_{\mathbf{x} \in \Omega}|\phi(\mathbf{x})-r(\mathbf{x})|$,
where $r(\mathbf{x}) = \|\mathbf{x}-\mathbf{x}_0\|$ denotes the exact solution for the homogeneous case, and the domain $\Omega$ is defined as the bounded region $[0,1]^2\setminus[0.48,0.52]^2$. The numerical results show that both schemes converge to the exact traveltime at approximately first order, consistent with standard observations for first-order scalar fast sweeping methods.

\begin{table}[!htb]
\centering
\caption{Error metrics used to assess the accuracy of the numerical gradient, slowness, and orientation fields. 
Here $\alpha$ and $\mathbf{n}$ denote the exact analytical slowness and direction fields, respectively. 
For each quantity, we report the discrete $L^1$-, $L^2$-, and $L^\infty$-norms of the corresponding scalar error. 
For instance, the slowness errors are defined as 
$E_{\text{slow},1} = \int |\|\nabla \phi\| - \alpha|$, 
$E_{\text{slow},2} = \left[\int |\|\nabla \phi\| - \alpha|^2 \right]^{1/2}$, 
and 
$E_{\text{slow},\infty} = \max |\|\nabla \phi\| - \alpha|$.}
\label{tab:error_metrics}
\begin{tabular}{c|c}
\textbf{Quantity} & \textbf{Error Definition} \\ \hline \\
{Slowness error}: $E_{\text{slow},1}$, $E_{\text{slow},2}$ and $E_{\text{slow},\infty}$ & $\left| \|\nabla \phi\| - \alpha \right|$ \\ 
\\
{Orientation error}: $E_{\text{ori},1}$, $E_{\text{ori},2}$ and $E_{\text{ori},\infty}$ &  $\left\|\frac{\nabla \phi}{\|\nabla \phi\|} - \mathbf{n}\right\|_2 $ \\
\\
{Gradient error}: $E_{\text{grad},1}$, $E_{\text{grad},2}$ and $E_{\text{grad},\infty}$ & $\|\nabla \phi - \alpha \mathbf{n}\|_2$ 
\end{tabular}
\end{table}

To assess the accuracy of the proposed method, we introduce three groups of error
metrics that evaluate complementary aspects of the numerical traveltime gradient.
The first group examines the accuracy of the gradient magnitude, verifying whether
$\|\nabla \phi\|$ matches the prescribed slowness $\alpha$, which reflects how well
the numerical solution adheres to the eikonal constraint.  
The second group isolates directional accuracy by comparing the normalized numerical
gradient with the orientation vector $\mathbf{n}$, thereby quantifying agreement in the
wavefront propagation direction.  
The third group measures how well the reconstructed field satisfies the fundamental
relation $\nabla \phi = \alpha \mathbf{n}$ by comparing the numerical gradient with the
product of the slowness and the computed orientation field.  
Together, these metrics provide a comprehensive evaluation of magnitude accuracy, directional fidelity and geometric consistency.  
The precise definitions are summarized in Table~\ref{tab:error_metrics}.

\begin{figure}[!htb]
\centering
\includegraphics[width=\textwidth]{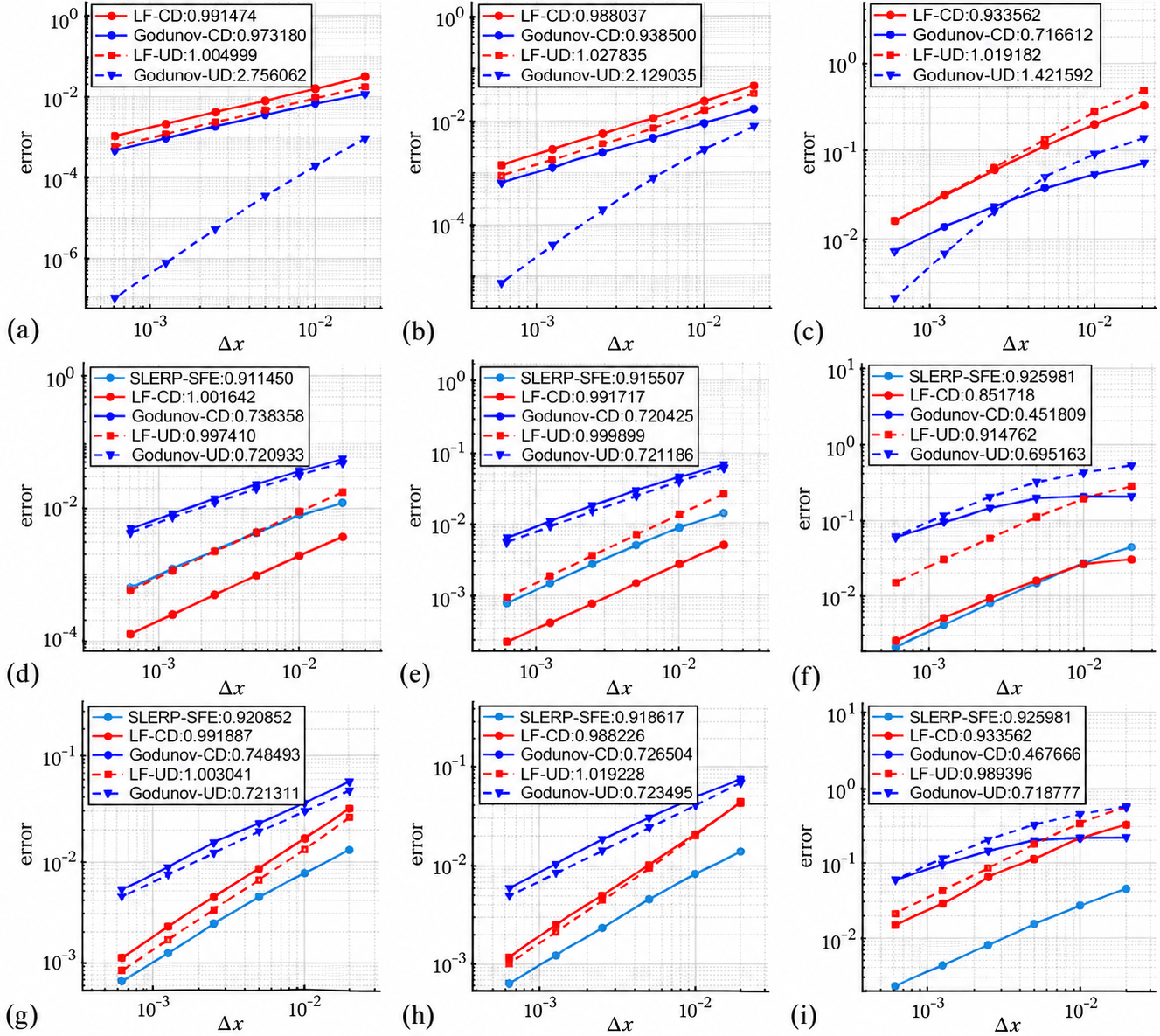}
\caption{Homogeneous two-dimensional point-source test of Section~\ref{SubSec:2DConvergence}.  The source is located at $\mathbf{x}_0=(0.5,0.5)$, the fixed source neighborhood is $[0.48,0.52]^2$, and the grid spacings are $dx=dy=0.02,0.01,0.005,0.0025,0.00125,0.000625$.  Panels (a)--(c) show the slowness errors $E_{\text{slow},1}$, $E_{\text{slow},2}$, and $E_{\text{slow},\infty}$; panels (d)--(f) show the orientation errors $E_{\text{ori},1}$, $E_{\text{ori},2}$, and $E_{\text{ori},\infty}$; panels (g)--(i) show the gradient-vector errors $E_{\text{grad},1}$, $E_{\text{grad},2}$, and $E_{\text{grad},\infty}$.  The reference orientation is $\tilde{\mathbf{n}}=(\mathbf{x}-\mathbf{x}_0)/\|\mathbf{x}-\mathbf{x}_0\|$. {The proposed orientation-equation method is denoted by SLERP--SFE, while Godunov--CD, Godunov--UD, LF--CD, and LF--UD denote the four scalar-eikonal workflows defined at the beginning of this section.}}
\label{fig:constant_slowness_1}
\end{figure}

Figure~\ref{fig:constant_slowness_1}(a--c) report the slowness errors measured by 
$E_{\text{slow},1}$, $E_{\text{slow},2}$, and $E_{\text{slow},\infty}$, where the traveltime fields are obtained using conventional scalar fast sweeping methods for the eikonal equation.
To evaluate the numerical slowness, we approximate the gradient of the computed traveltime using either an upwind or a central finite difference scheme{, corresponding to Godunov--CD, Godunov--UD, LF--CD, and LF--UD,} and then compute its Euclidean norm.
Ideally, this quantity should coincide with the prescribed slowness, as dictated by the eikonal relation $\|\nabla \phi\| = \alpha$.
Thus, these errors measure the residual violation of the eikonal constraint in the discrete solution.
The results show that the residual decreases at approximately first order under mesh refinement.
This behavior is consistent with the underlying first-order monotone discretization of the scalar eikonal equation.
We note that first-order fast sweeping schemes have been observed to exhibit superconvergence properties in smooth regions~\cite{benluozha10}, which can improve post-processed gradient accuracy in favorable cases.
In the present example, the reconstructed slowness retains essentially first-order accuracy, and the convergence behavior is nearly identical whether the gradient is approximated using first-order upwind differencing or second-order central differencing within the Lax--Friedrichs fast sweeping framework.
Among the scalar-based approaches used in this comparison, {Godunov--UD} achieves smaller residuals and exhibits slightly better performance.
Figure~\ref{fig:constant_slowness_1}(d--f) shows the orientation errors measured by 
$E_{\text{ori},1}$, $E_{\text{ori},2}$, and $E_{\text{ori},\infty}$.
All methods demonstrate first-order convergence.
{
In this test, Godunov--CD and Godunov--UD give larger orientation errors than the corresponding LF--CD and LF--UD workflows.
}
{SLERP--SFE preserves the prescribed slowness throughout the computation.}
Finally, Figure~\ref{fig:constant_slowness_1}(g--i) reports the errors in the full gradient vector $\nabla \phi$ measured in the $1$-, $2$-, and $\infty$-norms.

{
SLERP--SFE gives the smallest gradient errors before magnitude
correction, while LF--CD gives smaller $L^1$ and $L^2$ orientation
errors (Figure~\ref{fig:constant_slowness_1}(d,e)).
When magnitude correction is applied, the scalar workflows can
also give smaller gradient errors on sufficiently fine grids.
The results for this homogeneous point-source example therefore
support first-order convergence, although the proposed method
does not show a uniform advantage in directional accuracy.
}

\begin{figure}[!htb]
\centering
\includegraphics[width=\textwidth]{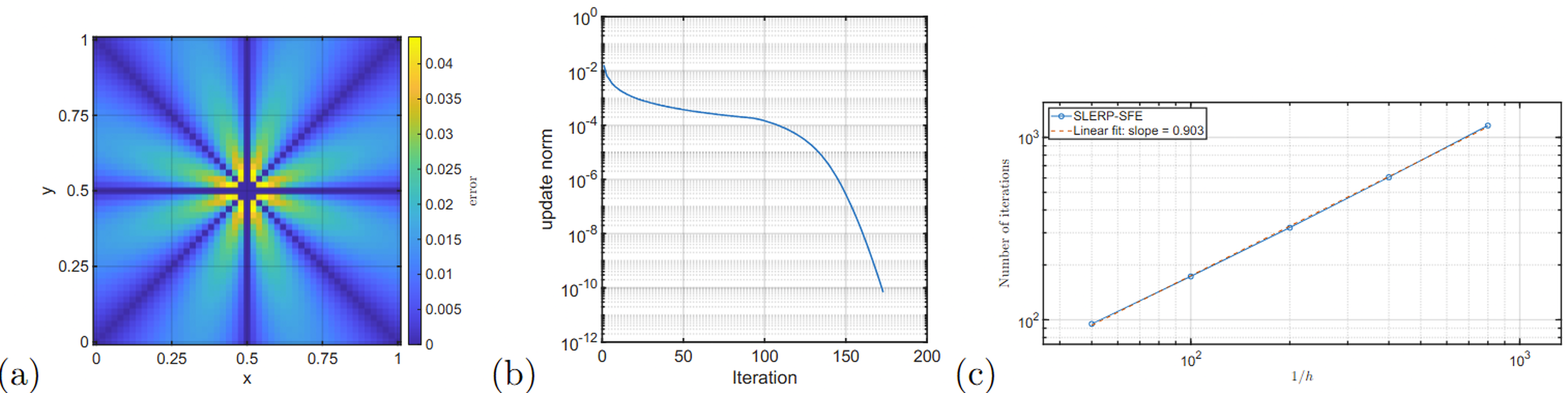}
\caption{Additional diagnostics for the homogeneous two-dimensional test.  Panel (a) shows the pointwise gradient error $E_{i,j}=\|\alpha(\mathbf{x}_{i,j})\mathbf{n}_{i,j}-\tilde{\mathbf{n}}_{i,j}\|_2$ on the computational mesh.  Panel (b) reports the nonlinear pseudo-time {update norm} $\max_{i,j}\|\mathbf{n}_{i,j}^{k+1}-\mathbf{n}_{i,j}^k\|_2$ for $dx=dy=0.01$.  Panel (c) reports the number of pseudo-time iterations required to reach the tolerance $\epsilon=10^{-10}$ for the grid refinements used in Figure~\ref{fig:constant_slowness_1}.}
\label{fig:experiment_constant_slowness}
\end{figure}

Figure~\ref{fig:experiment_constant_slowness} (a) shows the pointwise error in the computed orientation field evaluated by comparing the true gradient field $\grad\phi$ with its numerical approximation.
To recover the physical gradient from our vector-valued formulation, we scale the numerical orientation vector $\mathbf{n}$ by the slowness $\alpha(\mathbf{x})$.
Let $\mathbf{n}$ denote the numerical solution.
The pointwise error is defined as 
\begin{equation} \label{eq:error_on_mesh}
    E_{i,j} = \|\alpha(\mathbf{x}_{i,j})\mathbf{n}_{i,j} - \tilde{\mathbf{n}}_{i,j} \|_2
\end{equation}
As observed, the error is largest in the vicinity of the point source and decreases with distance.
This behavior is consistent with the singular nature of the solution at the source.
Figure~\ref{fig:experiment_constant_slowness}(b) illustrates the convergence behavior for $dx=0.01$.
The {update norm} $\|\mathbf{n}_{i,j}^{k+1} - \mathbf{n}_{i,j}^{k}\|_2$ decreases monotonically with iteration count in this experiment, demonstrating stable relaxation toward a steady state.
Unlike the classical scalar Godunov fast sweeping method, where the number of iterations required for convergence is often essentially independent of the mesh size, the present vector-valued pseudo-time iteration does not exhibit this behavior. 
As shown in Figure~\ref{fig:experiment_constant_slowness}(c), the number of iterations increases gradually as the grid is refined.
This behavior is likely attributable to the nonlinear coupling on the unit sphere introduced by our vector-valued formulation, which effectively increases the stiffness of the fixed-point iteration as resolution improves.
\par}

{
\subsubsection{Curved Inhomogeneous Fields and Cost}
\label{sec:scalar_curved_cost}

We now consider the graded family~\eqref{eq:graded_perturbation}
with $\varepsilon=0,\pm0.05$. The initial orientation is the same
nonexact constant field used in Section~\ref{sec:projection_experiments},
and we prescribe matching scalar traveltimes and orientations on
$x=0$ and $y=0$. In this example, both components of the exact
orientation remain positive. The characteristic argument
following~\eqref{eq:graded_2d} therefore identifies the smooth
field with the first-arrival solution.

The scalar Godunov and Lax--Friedrichs solvers are implemented
using their standard first-order formulas~\cite{zha05,kaooshqia04}.
For Lax--Friedrichs, we set the viscosities to $\sigma_x=\sigma_y=1$
and apply the prescribed extrapolation/max/min rule at the external
ghost nodes. To recover the gradient, CD uses centered differences
at interior nodes and second-order one-sided differences at the
edges. UD instead selects the neighbor with the lower traveltime
and uses the available side at an edge. All errors are evaluated
on the same unfixed physical nodes. Each solver is iterated until
its own maximum discrete residual reaches $10^{-10}$.
Since these residuals have different units, we do not interpret
equal tolerances as equal error estimates.

\begin{figure}[!htb]
\centering

\includegraphics[width=\textwidth]{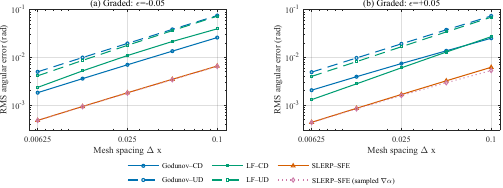}
\caption{Directional accuracy of scalar-first recovery and SLERP--SFE for the
curved graded fields. Panels (a,b) show RMS angular errors for
$\varepsilon=-0.05,+0.05$, respectively, on the common grids
$\Delta x=0.1,0.05,0.025,0.0125,0.00625$.
The two SLERP--SFE curves use analytic or sampled $\nabla\alpha$;
the sampled derivative uses CD and second-order one-sided edge differences,
without exact ghost slownesses. Both use step~\eqref{eq:ablation_timestep}.
The scalar labels denote the workflows defined above; magnitude correction
does not change their angular errors.}
\label{fig:scalar_recovery}
\end{figure}

The results in Figure~\ref{fig:scalar_recovery} show smaller
angular errors for SLERP--SFE. At $\Delta x=0.025$, its errors
are $1.806546\times10^{-3}$ and $1.663636\times10^{-3}$ rad,
respectively, improving on the best of the four scalar workflows
by factors of 3.86 and 3.62. With sampled $\nabla\alpha$, the
corresponding errors are $1.778644\times10^{-3}$ and
$1.583162\times10^{-3}$ rad. For $\varepsilon=0$, SLERP--SFE
with an analytic medium derivative preserves the curved
equilibrium to the stopping accuracy. With a sampled derivative,
its error decreases from $9.585\times10^{-4}$ to
$4.355\times10^{-6}$ rad over the same grids. These results
quantify the directional accuracy of the SLERP spatial
discretization for both choices of medium derivative.

\begin{table}[!htb]
\centering\small

\begin{tabular}{@{}rrrrrr@{}}
\toprule
 & \multicolumn{2}{c}{Godunov--CD} & \multicolumn{2}{c}{SLERP--SFE}
 & \shortstack{Time ratio\\scalar/SLERP--SFE}\\
$\varepsilon$ & $\Delta x$ & Time (s) & $\Delta x$ & Time (s) & \\
\midrule
0 & 0.003125 & 0.01449 & 0.1 & 0.00694 & 2.086\\
-0.05 & 0.003125 & 0.03704 & 0.0125 & 0.68053 & 0.054\\
0.05 & 0.0015625 & 0.14707 & 0.0125 & 0.67808 & 0.217\\
\bottomrule
\end{tabular}
\caption{Cost at the common RMS angular-error target $10^{-3}$ rad.
Each workflow uses its least-cost qualifying tested grid, selected from
three-run medians; scalar grids extend to $\Delta x=0.0015625$ as needed.
Godunov--CD is the cheapest of the four scalar workflows tested.
Selected workflows are remeasured in seven paired batches after three
warmup solves. Batch means use 1--9 calls according to the pilot cost;
the entries are their medians. SLERP--SFE here uses analytic $\nabla\alpha$.
The scalar/SLERP--SFE ratio exceeds one only for $\varepsilon=0$;
this ordering holds in every batch, also with sampled $\nabla\alpha$.}
\label{tab:scalar_target_cost}
\end{table}

Table~\ref{tab:scalar_target_cost} compares the computational
costs at the prescribed angular accuracy. SLERP--SFE has a cost
advantage for the preserved equilibrium, while Godunov is cheaper
for both perturbations, also with sampled $\nabla\alpha$.
The reported times include initialization, monitoring, gradient
recovery, normalization and sampled medium differentiation.
They exclude input construction, diagnostics and I/O. 

\par}
\FloatBarrier

\subsection{Waveguide Model}
\label{SubSec:Waveguide}

We consider the waveguide model introduced in~\cite{qialeu04}, where the slowness is given by
$$\alpha(x, y) = \left[\, 1.1 - \exp(-0.5x^2) \,\right]^{-1}$$ for $(x,y)\in[-1,1]\times[0,0.7]$. In this setting, rays enter the domain from the lower boundary $y=0$ and propagate predominantly upward.  
Accordingly, we prescribe the incoming boundary condition $\mathbf{n}(x,0) = (0,1)$.
The initial condition is constructed using the solution of the corresponding homogeneous problem under the same boundary condition; that is, we set $\mathbf{n}(x,y)=(0,1)$ at all interior grid points.   
Simulations are performed on uniform grids with the stepsize 
$dx = dy = 0.05, 0.025, 0.0125$ and $0.00625$. 
We set $\epsilon=10^{-10}$.
Since an analytical solution is not available, we do not compute the error in this case. 
Instead, we compare the computed gradient vector field with rays traced from the source.

\begin{figure}[!htb]
\centering
\includegraphics[width=\textwidth]{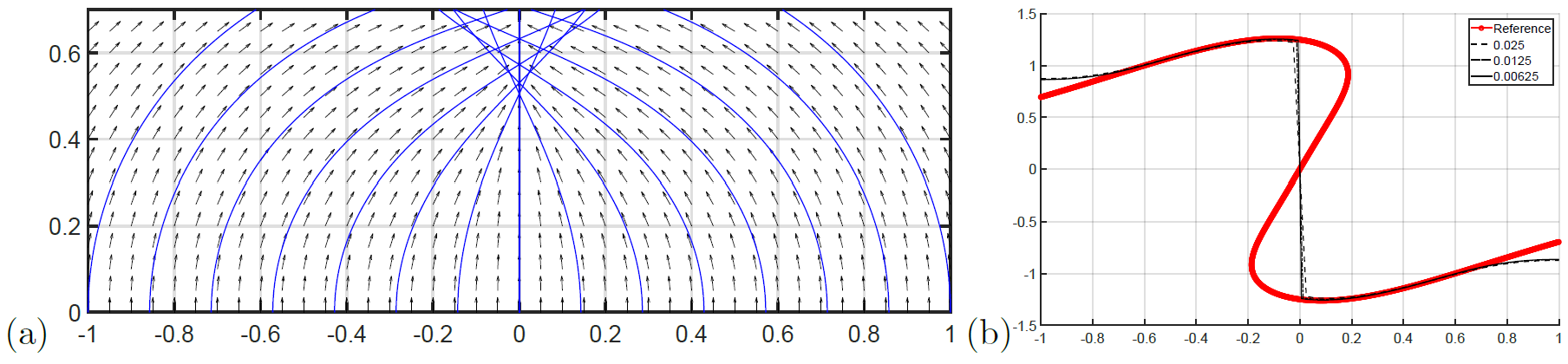}
\caption{Waveguide model of Section~\ref{SubSec:Waveguide} with $\alpha(x,y)=[1.1-\exp(-0.5x^2)]^{-1}$ on $[-1,1]\times[0,0.7]$ and incoming boundary condition $\mathbf{n}(x,0)=(0,1)$.  Panel (a) shows the computed orientation field together with reference rays.  Panel (b) compares the arrival angle $\theta$ along the top boundary $y=0.7$ for grid spacings $dx=dy=0.05,0.025,0.0125,0.00625$ against the ray-tracing reference.}
\label{fig:waveguide}
\end{figure}

Figure~\ref{fig:waveguide} shows representative numerical results. 
Figure~\ref{fig:waveguide}(a) presents the computed orientation field, which illustrates the smooth bending of propagation directions caused by the waveguide structure.
To compare with the expected physical behavior, we overlay several rays on the mesh.
These rays serve as reference solutions, obtained using the ray tracing method in~\cite{qialeu04} on a sufficiently fine mesh.
Figure~\ref{fig:waveguide}(b) plots the arrival angle $\theta$ along the top boundary ($y=0.7$). 
We note that regions not covered by the reference rays are less relevant, as they depend on information outside the computational domain.
Moreover, since one-sided differences are used at the left and right boundaries, the vectors in these regions are not updated according to the true characteristic flow. 
For this reason, we restrict our comparison to the region enclosed by the blue rays. 
Within this region, we observe that the computed arrival angles agree well with the reference solution.
Overall, these results show that the structure-preserving scheme can accurately capture the directional behavior governed by the spatially varying slowness.

\subsection{Sinusoidal Model}
\label{SubSec:ExSine}

\begin{figure}[!htb]
\centering
\includegraphics[width=\textwidth]{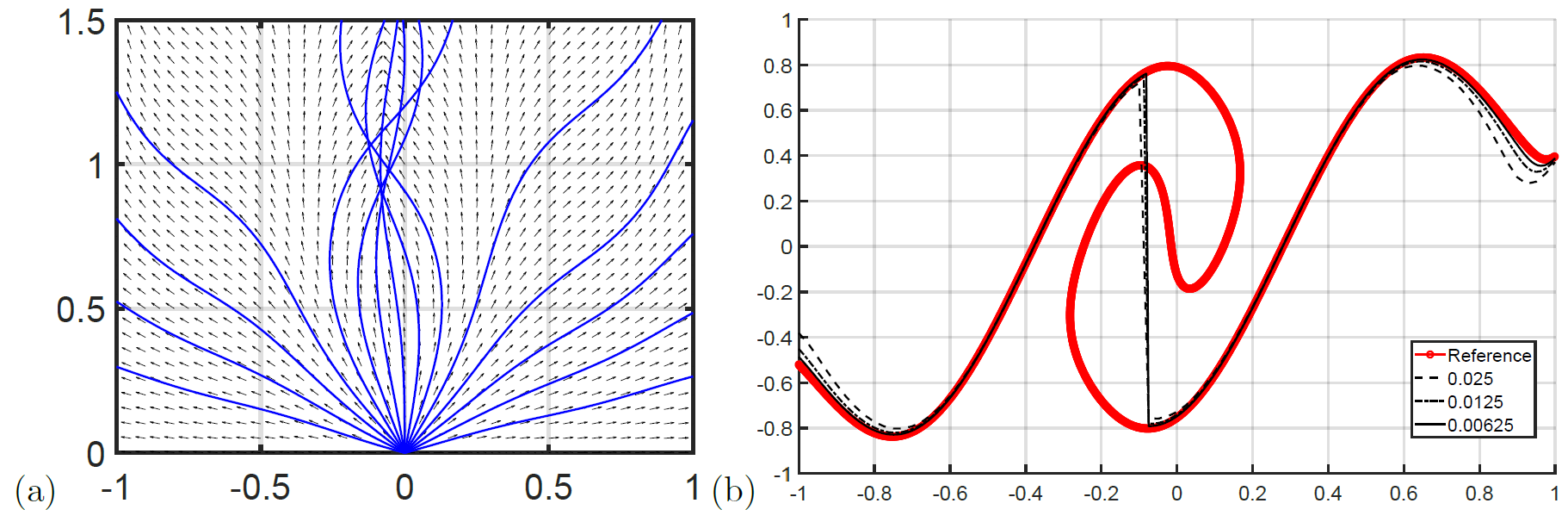}
\caption{Sinusoidal slowness model of Section~\ref{SubSec:ExSine}.  Panel (a) shows the computed orientation field and representative rays initialized near the point source.  Panel (b) compares the propagation angle $\theta$ along the top boundary $y=1.5$ for the grid refinements used in the experiment against the ray-tracing reference solution.}
\label{fig:sinusoidal}
\end{figure}

We further assess the performance of the proposed method using the sinusoidal model introduced in~\cite{sym98,symqia03slowm,qialeu04}, where the slowness field is given by $\alpha(x,y) = (1 + 0.2\,\sin(0.5\pi y)\sin\!\big[3\pi(x+0.55)\big])^{-1}$, where $(x,y)\in[-1,1]\times[0,1.5]$.
This spatially oscillatory coefficient breaks the symmetry present in previous examples and induces highly anisotropic propagation effects.  
As a result, the exact wavefronts exhibit directional bending and curvature variations that depend sensitively on location. 
This experiment provides additional validation of our scheme, demonstrating its ability to capture complex, non-symmetric propagation patterns in a heterogeneous medium. 
We consider a point source located at the origin. 
Because characteristics emanate outward from this location, we prescribe the direction field on the small neighborhood $-0.06 \leq x \leq 0.06$, $0 \leq y \leq 0.06$.
As in previous examples, this boundary specification is imposed in a fixed region to avoid the singularity at the source. 
For the remaining grid points, we initialize by employing the solution to the homogeneous system with the same boundary condition. Simulations are carried out on uniform grids with stepsize $dx = dy = 0.025, 0.0125$ and $0.00625$. 
The parameter in the stopping criteria $\epsilon$ is taken to be $10^{-10}$.
As in the previous experiment, we do not compute the error since the analytical solution is unavailable. 
Instead, we compare the computed gradient vector field with rays traced from the source.

The numerical results are shown in Figure~\ref{fig:sinusoidal}.
Figure~\ref{fig:sinusoidal}(a) displays the computed orientation field $\mathbf{n}$, where the arrows indicate the gradient directions obtained from our model, and the blue curves represent a set of rays traced from the point-source boundary.
These rays serve as reference solutions computed using the ray tracing method in~\cite{qialeu04}.
Since the characteristics originate from the source, the rays are initialized at the origin. 
As shown in the figure, the ray paths follow the underlying velocity field closely, indicating good qualitative agreement between the computed characteristic directions and the vector field produced by the proposed method.
Figure~\ref{fig:sinusoidal}(b) shows the arrival angle along the top boundary $(y=1.5)$ for both the proposed model and the reference solution.
The results from our model agree very well with the reference, and the agreement improves as the grid is refined. 
This convergence demonstrates both the accuracy and the geometric consistency of the proposed approach.

\subsection{Three-Dimensional Examples}

While the preceding discussions focus on two-dimensional cases, the proposed structure-preserving method naturally extends to three dimensions without modification. 
This section presents several three-dimensional examples to demonstrate the extension and convergence behavior of the proposed scheme.

\subsubsection{Radial Example}
\label{SubSec:Ex3DRadial}

\begin{figure}[!htb]
\centering
\includegraphics[width=\textwidth]{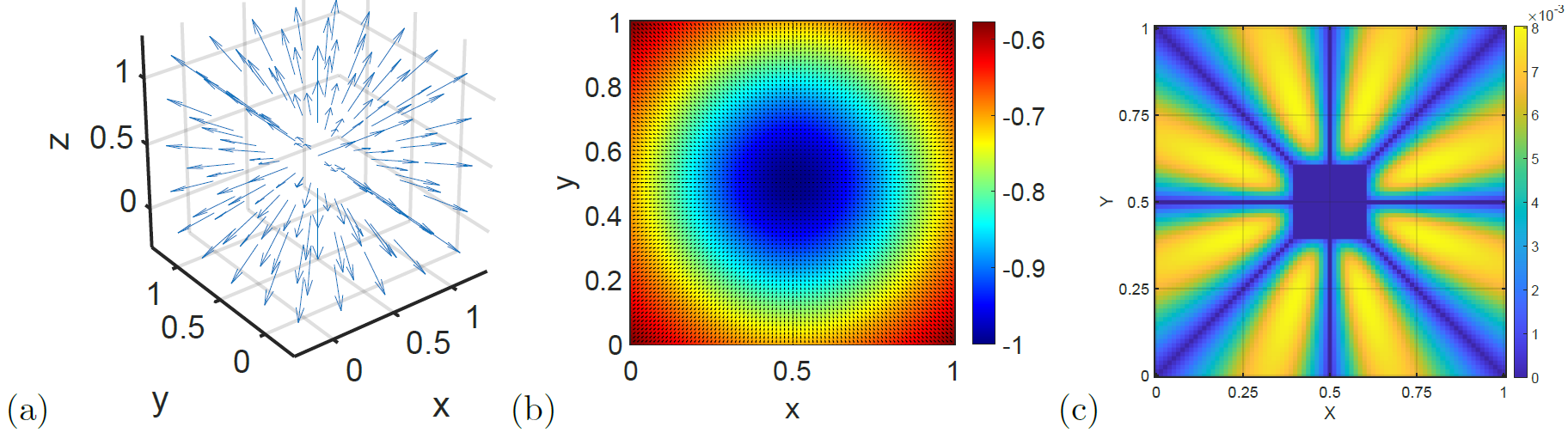}
\caption{Three-dimensional radial test of Section~\ref{SubSec:Ex3DRadial} for $\|\nabla\phi\|=1+r(\mathbf{x})^2$ on $[0,1]^3$ with source $\mathbf{x}_0=(0.5,0.5,0.5)$.  Panel (a) shows representative computed orientation vectors in three dimensions.  Panel (b) shows the vector field on the cross section $z=0$.  Panel (c) shows the pointwise gradient error on the cross section $z=0.49$, excluding the fixed source neighborhood.}
\label{fig:radial_dim3_1}
\end{figure}

\begin{figure}[!htb]
\centering
\includegraphics[width=\textwidth]{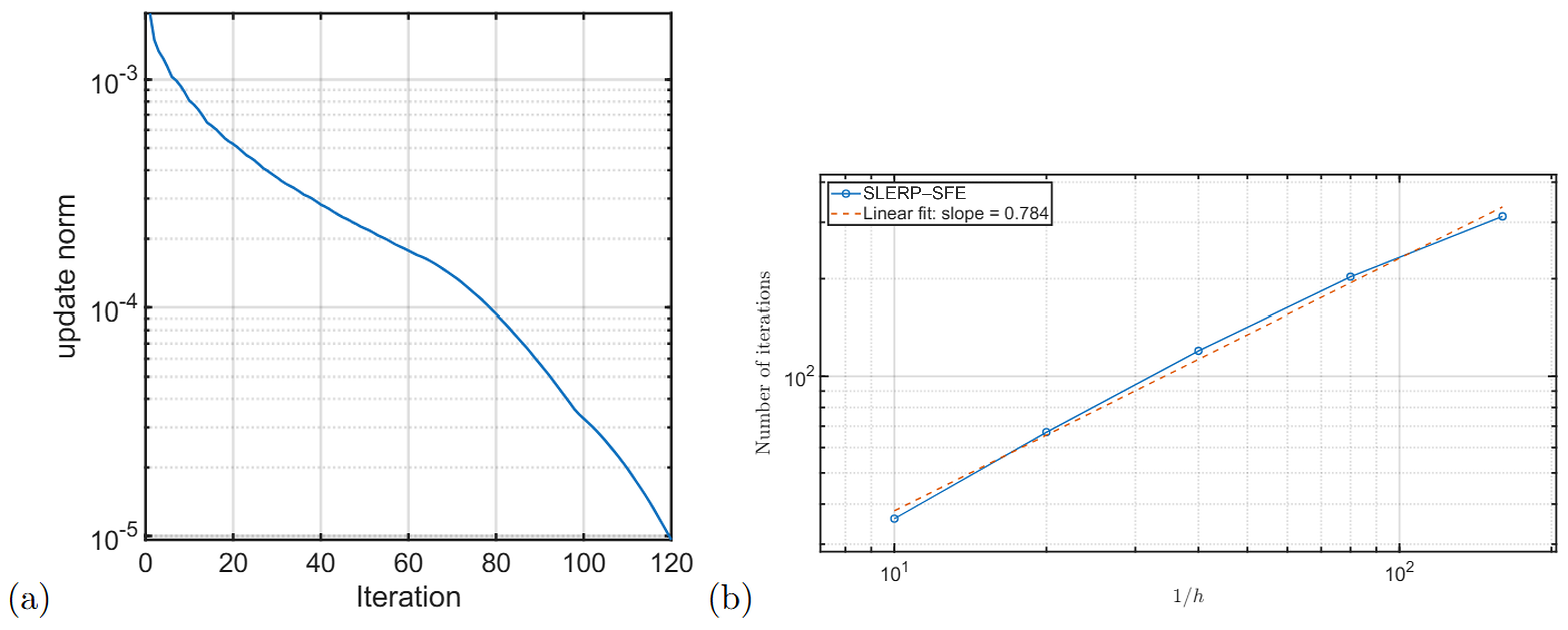}
\caption{Pseudo-time convergence diagnostics for the three-dimensional radial test.  Panel (a) shows the decay of $\max_{i,j,k}\|\mathbf{n}_{i,j,k}^{m+1}-\mathbf{n}_{i,j,k}^{m}\|_2$ for $dx=dy=dz=0.025$.  Panel (b) reports the number of pseudo-time iterations needed to reach the stopping tolerance for the grid spacings $0.1,0.05,0.025,0.0125,0.00625$.}
\label{fig:experiment_2_2}
\end{figure}

We consider $\|\nabla \phi(\mathbf{x})\| = 1 + r(\mathbf{x})^2$ in $\mathbf{x} \in [0,1]^3$ where $r(\mathbf{x}) = \|\mathbf{x}-\mathbf{x}_0\|$. The boundary condition is given by $\phi(\mathbf{x}_0) = 0$ at $\mathbf{x}_0 = (0.5,0.5,0.5)$. Numerically, for grid points in the neighborhood $0.39 \leq x,y,z \leq 0.61$, we prescribe
$\mathbf{n}(x,y,z) = \frac{(x-x_0,\; y-y_0,\; z-z_0)}{r(x,y,z)}$, thereby enforcing the known characteristic directions in an $O(1)$ region around the source.  
The remaining interior points are initialized using the solution to the corresponding homogeneous problem.
Simulations are performed on uniform grids with stepsize $dx=dy=dz = 0.1,0.05,0.025,0.0125$ and $0.00625$.
The convergence tolerance is set to $\epsilon=10^{-5}$.

Figure~\ref{fig:radial_dim3_1}(a) presents a three-dimensional visualization of the computed gradient field, where representative vectors are displayed at selected grid points. The numerical solution clearly exhibits vectors radiating outward from the point source, in full agreement with the analytical structure of the radial model. To provide further insight, Figure~\ref{fig:radial_dim3_1}(b) shows a two-dimensional slice at $z=0$. On this plane, the $x$-- and $y$--components determine the in-plane direction of each vector, while the $z$--component is encoded through color intensity. The resulting field again demonstrates outward propagation from the source, with the magnitude increasing toward the center, consistent with the radial symmetry of the full three-dimensional solution. Figure~\ref{fig:radial_dim3_1}(c) displays the error distribution on the slice $z=0.49$, computed using the metric defined in~\eqref{eq:error_on_mesh}. As in the two-dimensional experiment, the fixed central region $0.39 \leq x,y \leq 0.61$ is excluded from the error calculation. Because this slice lies close to the source, the geometry is effectively two-dimensional, and the qualitative error pattern resembles that observed previously. The largest errors occur near the outer boundaries of the domain, where characteristic surfaces intersect the domain edges and directional discontinuities naturally develop.

Figure~\ref{fig:experiment_2_2}(a) illustrates the monotone decay of the {update norm} throughout the iterative process for this test, providing numerical evidence of convergence for the proposed structure-preserving method. The corresponding iteration count is shown in Figure~\ref{fig:experiment_2_2}(b), where, consistent with the two-dimensional case, the number of iterations increases gradually as the grid is refined.

\begin{figure}[!htb]
\centering
\includegraphics[width=\textwidth]{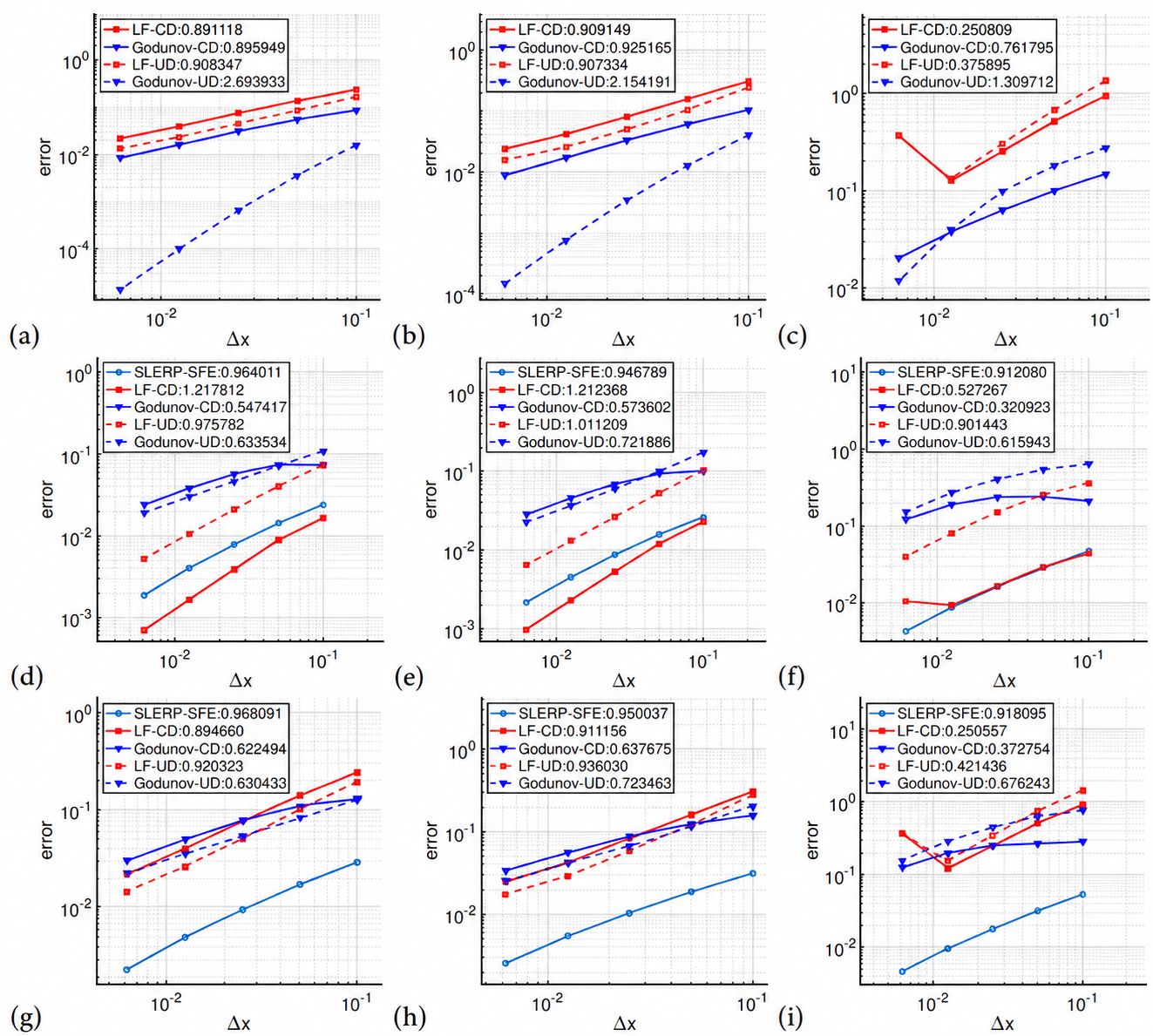}
\caption{Quantitative convergence for the three-dimensional radial test.  Panels (a)--(c) report the slowness errors $E_{\text{slow},1}$, $E_{\text{slow},2}$, and $E_{\text{slow},\infty}$; panels (d)--(f) report the orientation errors $E_{\text{ori},1}$, $E_{\text{ori},2}$, and $E_{\text{ori},\infty}$; panels (g)--(i) report the gradient-vector errors $E_{\text{grad},1}$, $E_{\text{grad},2}$, and $E_{\text{grad},\infty}$. {SLERP--SFE is compared with the four scalar-based workflows Godunov--CD, Godunov--UD, LF--CD, and LF--UD on the same sequence
of grid spacings.}}
\label{fig:experiment_2_3}
\end{figure}

Finally, Figures~\ref{fig:experiment_2_3} summarize the quantitative convergence results using the error metrics defined in Table~\ref{tab:error_metrics}.  Figures~\ref{fig:experiment_2_3}(a-c) report the corresponding slowness errors, while Figures~\ref{fig:experiment_2_3}(d-f) compare the orientation errors across all methods.  
Figures~\ref{fig:experiment_2_3}(g-i) further present the errors in the reconstructed gradient field $\nabla \phi$. {In panels (g--i), we observe approximately first-order convergence for
SLERP--SFE, with smaller errors than those obtained from the uncorrected
scalar gradients. However, this improvement does not hold uniformly for
the direction field, since LF--CD gives smaller $L^1$ and $L^2$
orientation errors in panels (d,e).}

\subsubsection{Vinje's Gaussian Model}
\label{SubSec:ExVinje}

We further assess the performance of the proposed numerical scheme using Vinje's Gaussian velocity model.
This model, introduced in \cite{viag96}, prescribes the slowness field 
\[
\alpha(x,y,z) = \left[3-1.75\exp\left(- \frac{x^2+y^2+(z-0.75)^2}{0.5^2} \right)\right]^{-1}
\]
, for $(x,y,z)\in[-1,1]^3$. We place a point source at the origin.
To construct the boundary condition required in our formulation, we treat grid points inside the small cube $-0.05 \leq x,y,z \leq 0.05$ as source-region points. Simulations are performed on uniform grids with the stepsize $dx=dy=dz=0.02$.
We set the convergence tolerance to $\epsilon=10^{-8}$.

\begin{figure}[!htb]
\centering
\includegraphics[width=\textwidth]{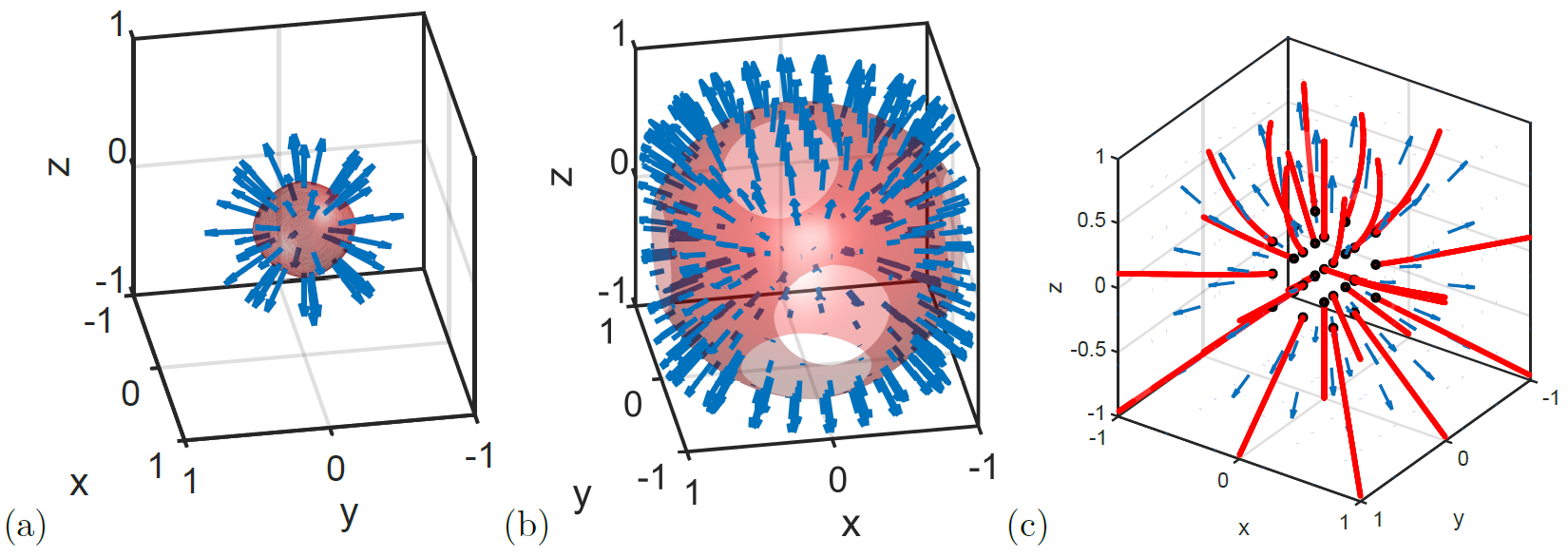}
\caption{Vinje's Gaussian model of Section~\ref{SubSec:ExVinje}.  Panels (a) and (b) show the computed gradient field plotted on the level surfaces $\phi=0.12$ and $\phi=0.37$, respectively.  Panel (c) shows a downsampled orientation field with rays traced from points near the origin, illustrating consistency between the computed direction field and the ray trajectories.}
\label{fig:Vinje_Model}
\end{figure}

The numerical results appear in Figure~\ref{fig:Vinje_Model}. Figures~\ref{fig:Vinje_Model}(a) and \ref{fig:Vinje_Model}(b) show the gradient vectors restricted to the level surfaces $\phi=0.12$ and $\phi=0.37$, respectively.
In both cases, the computed vectors are orthogonal to the corresponding level sets.
This behavior agrees with the theoretical property $\mathbf{n} = \grad\phi/\|\grad\phi\|$.
This observation further supports the geometric fidelity of the proposed discretization. Figure~\ref{fig:Vinje_Model}(c) shows a downsampled visualization of the computed gradient vector field inside the domain.
To illustrate the effect of the spatially varying slowness, we emit several rays from points near the origin.
We compute their trajectories by integrating the system $d\mathbf{x}/dt=\mathbf{n}$.
The resulting curves are consistent with the characteristic paths expected from the computed orientation field.

{
\subsubsection{Projection Comparison in Three Dimensions}
\label{sec:projection_3d_comparison}

We next compare FD--NE, PFD--NE, SLERP--NE, and SLERP--SFE in three
dimensions. For this comparison, we use the same pseudo-time step and
angular-error measure as in Section~\ref{sec:projection_experiments},
with the tolerance on the maximum tangential residual set to $10^{-8}$.
The timing protocol is also kept unchanged.

To examine the effect of rotating the equilibrium and introducing
nonplanar perturbations, we consider a family of fields on $[0,1]^3$.
Specifically, we choose $\mathbf a=(1,1,0)/\sqrt2$,
$\mathbf b=(-1,1,2)/\sqrt6$, $\mathbf c=\mathbf a\times\mathbf b$,
and define
\begin{equation}\label{eq:geodesic_3d_perturbed}
 \phi_\varepsilon(\mathbf x)=e^{0.6v}
 \left[\frac{\sin(0.15+0.6u)}{0.6}
       +\frac{\varepsilon\sin(\pi w)}{\pi}\right],
 \quad (u,v,w)=(\mathbf a\cdot\mathbf x,\mathbf b\cdot\mathbf x,
                         \mathbf c\cdot\mathbf x),
\end{equation}
where $\alpha=\|\nabla\phi_\varepsilon\|$ and $\nabla\alpha$ is
evaluated analytically. On the boundary faces where the exact field
points into the domain, we prescribe the exact incoming orientations
and keep these values fixed. At all remaining nodes, the initial
orientation is given by $\cos(0.15)\mathbf a+\sin(0.15)\mathbf b$.
Figure~\ref{fig:3d_orientation_fields}(a,b) presents the computed
fields for $\varepsilon=0,0.05$, while panel (c) compares the
out-of-plane angles for $\varepsilon=0,\pm0.05$. All computations
use the same grid with $\Delta x=0.025$. The unperturbed field lies in
$\operatorname{span}(\mathbf a,\mathbf b)$, whereas both perturbations
produce directions outside this plane. To describe this deviation,
panel (c) plots
$\arcsin[(\mathbf c\cdot\mathbf n_\varepsilon^{\rm prop})/\|\mathbf n_\varepsilon^{\rm prop}\|]$
along $x=z=0.5$. The normalization in this expression is used only
for the diagnostic.

\begin{figure}[!htb]
\centering

\includegraphics[width=\textwidth]{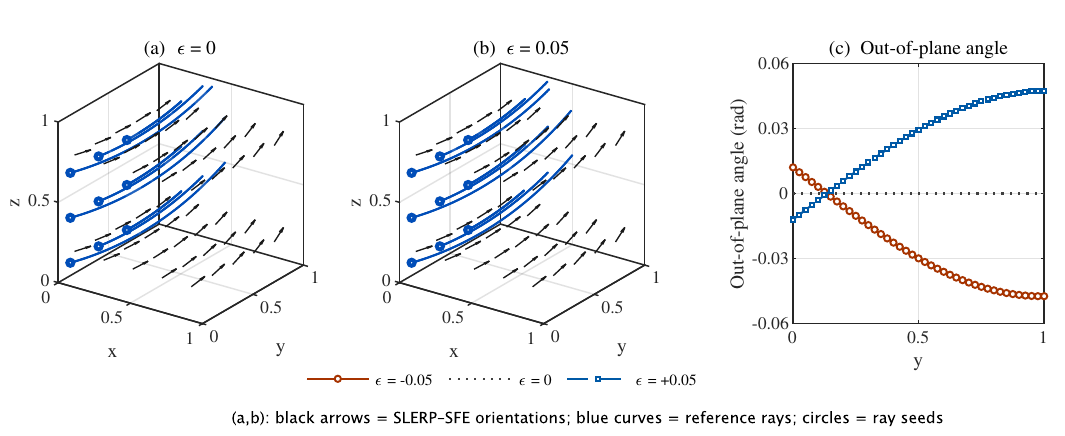}
\caption{Computed orientations and signed out-of-plane angles for the
rotated three-dimensional family~\eqref{eq:geodesic_3d_perturbed}.
All three computations use the same $\Delta x=0.025$ grid, the proposed
method, analytic $\nabla\alpha$, and residual tolerance $10^{-8}$.
(a,b) Fields for $\varepsilon=0,0.05$, respectively. Black arrows of a common length
show the numerical orientations at selected nodes on $z=0.2$ and $z=0.8$;
blue curves integrate the exact orientation field from the circles on
$x=0$ to the outgoing boundary. The axes use scaled coordinates and an
orthographic view. The rays are independent references; the arrows are
sampled numerical solutions.
(c) Signed angle of the computed orientation out of
$\operatorname{span}(\mathbf a,\mathbf b)$ along $x=z=0.5$, in radians.
Circles and squares mark all $41$ nodes for $\varepsilon=-0.05$ and
$+0.05$, respectively, with straight segments joining them. The dotted
curve corresponds to $\varepsilon=0$. All curves use the same symmetric
angle scale; no angle, arrow, or ray displacement is magnified.}
\label{fig:3d_orientation_fields}
\end{figure}

On the grid with $\Delta x=0.025$, the angular error of SLERP--SFE is
$E_{\rm ang}=4.20\times10^{-12}$ rad for $\varepsilon=0$. This result
confirms that the equilibrium is preserved after rotation relative
to the grid, as stated in Proposition~\ref{prop:geodesic_equilibrium};
the directions in this case remain on one great circle. For the
nonplanar field with $\varepsilon=0.05$, the errors obtained using
PFD--NE and SLERP--SFE are very close, with values of
$9.598842\times10^{-4}$ and $9.608098\times10^{-4}$ rad, respectively.

{
All four methods preserve unit length up to round-off error, with a maximum defect of $1.89\times10^{-14}$. This algebraic constraint does not, however, guarantee scalar-gradient compatibility in three dimensions. We therefore examine the curl of the reconstructed field for the nonplanar case ($\epsilon=0.05$).
}

\begin{table}[!htb]
\centering\small

\begin{tabular}{@{}rrrr@{}}
\toprule
$\Delta x$ & $\|C_h\mathbf q_h\|_{\rm RMS}$ &
$\|C_h\mathbf q_*\|_{\rm RMS}$ &
$\|C_h(\mathbf q_h-\mathbf q_*)\|_{\rm RMS}$\\
\midrule
0.1 & $1.21259\times10^{-2}$ & $6.60940\times10^{-4}$ & $1.18164\times10^{-2}$\\
0.05 & $7.43358\times10^{-3}$ & $1.63305\times10^{-4}$ & $7.36958\times10^{-3}$\\
0.025 & $4.11899\times10^{-3}$ & $4.05385\times10^{-5}$ & $4.10445\times10^{-3}$\\
\bottomrule
\end{tabular}
\caption{Curl diagnostic for SLERP--SFE in the nonplanar three-dimensional
field at $\varepsilon=0.05$, using analytic $\nabla\alpha$ and residual
tolerance $10^{-8}$. All columns use the same interior stencils and
scaled coordinates. Nonzero curl of the sampled exact gradient is
finite-difference truncation error.}
\label{tab:curl_refinement}
\end{table}

{
The results in Table~\ref{tab:curl_refinement} are obtained by applying
the same centered curl operator $C_h$ to the reconstructed field
$\mathbf q_h=\alpha\mathbf n_h$ and the sampled exact gradient
$\mathbf q_*$ for~\eqref{eq:geodesic_3d_perturbed}, $\varepsilon=0.05$.
We compute the RMS norms at interior nodes whose coordinate neighbors
are also unfixed. The column for the exact field measures the error
introduced by the stencil, while the last column reports
$C_h(\mathbf q_h-\mathbf q_*)$. This comparison separates the
contribution of the numerical field from the truncation error of
the curl stencil.
}

\par}

{
\subsection{Observed Computational Scaling}
\label{sec:runtime_scaling}

We further examine how the computational cost changes under grid
refinement, using the graded two-dimensional and nonplanar
three-dimensional fields with $\varepsilon=0.05$. The initial and
boundary data, analytic $\nabla\alpha$, pseudo-time step and residual
tolerances are kept as in the preceding tests.
Figure~\ref{fig:runtime_scaling} compares FD--NE, PFD--NE, SLERP--NE, and
SLERP--SFE, together with Godunov--CD and LF--CD in two dimensions.
For each workflow, the measured time includes initialization and
convergence monitoring, as well as scalar recovery when applicable.
The construction of the input data, diagnostics and I/O are excluded.

\begin{figure}[!htb]
\centering

\includegraphics[width=\textwidth]{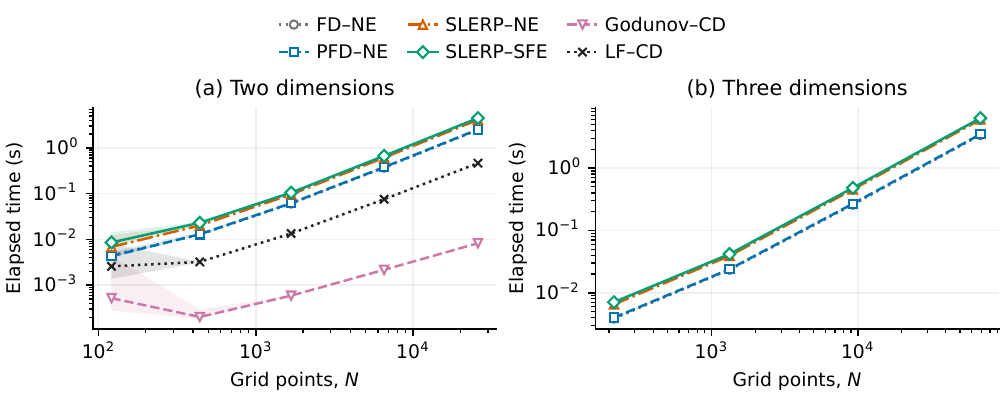}
\caption{Elapsed time $T$ against the total number $N$ of physical grid points.
(a) $\Delta x=0.1,0.05,0.025,0.0125,0.00625$, tolerance $10^{-10}$.
(b) $\Delta x=0.2,0.1,0.05,0.025$, tolerance $10^{-8}$.
Both use $\varepsilon=0.05$ and one MATLAB computational thread.
Markers are medians of five paired batch means after two warmup solves;
shading spans their minimum and maximum. Batches average 1--32 solves
according to pilot duration. The FD--NE and PFD--NE curves nearly coincide, as do the SLERP--NE and SLERP--SFE curves.
The reported slopes use the finest three grids in each panel.}
\label{fig:runtime_scaling}
\end{figure}

Over the tested grid refinements, SLERP--SFE requires 213--1619
iterations in two dimensions and 137--601 in three dimensions.
Fitting the results against $N$ on the finest three grids gives
exponents of 1.37 and 1.27 for the total computational time,
0.40 and 0.26 for the iteration count, and 0.97 and 1.01 for the
time per iteration, respectively. These results indicate that the
work per iteration is close to linear over the tested range, while
the growth in the iteration count accounts for the superlinear
total cost. SLERP--NE requires the same number of iterations on
these grids. In the two-dimensional comparison, Godunov requires
one cycle of four directional sweeps on each grid, with a measured
time exponent of 0.96. LF requires 15--74 cycles, with a
corresponding exponent of 1.29.
The comparisons in Tables~\ref{tab:projection_cost}
and~\ref{tab:scalar_target_cost} assess the computational cost at
prescribed angular accuracy.

\FloatBarrier
\par}

\section{Conclusion}
\label{sec:conc}

{We have developed a first-order, structure-preserving method for
directly computing eikonal traveltime orientations in smooth first-arrival
regions. The spatial discretization uses SLERP endpoint derivatives
to construct the upwind approximation in the tangent space, while
the spherical forward Euler update keeps every iterate on the unit
sphere. Consequently, the reconstructed field satisfies
$\|\alpha\mathbf n_h\|=\alpha$ up to round-off error, so that the
prescribed slowness is preserved throughout the computation.}

{The SLERP spatial discretization exactly preserves a family of curved
orientation equilibria. In the nearby perturbation tests, the removal
of background discretization bias improves the accuracy of the
absolute orientation and allows selected error targets to be reached
on coarser grids, with lower computational costs than FD--NE/PFD--NE.
This property is shared by SLERP--NE and SLERP--SFE, since it arises
from their spatial discretization. When compared with the four
scalar-first workflows on the same grid, SLERP--SFE also produces
more accurate directions in the tested curved media, including
the cases where the medium derivatives are obtained from sampled
slowness.}

{The one-cell analysis also establishes a conditional angular-error
advantage of SFE, which is separate from the spatial preservation
property. Together with the two- and three-dimensional comparisons,
this analysis clarifies the respective contributions of geometric
spatial reconstruction and the pseudo-time update. These results
provide a basis for developing higher-order orientation solvers.}

{Future extensions include SIDER or SENO spherical
reconstruction~\cite{fonleu23} and STVDRK pseudo-time
integration~\cite{leuchalee24,leeleu26}. We also plan to study convergence
of the nonlinear relaxation and applications to adjoint traveltime
tomography and related inverse problems.}

\bibliographystyle{plain}
\bibliography{reference,syleung}

\begin{thebibliography}{10}

\bibitem{bartels2009convergent}
S{\"o}ren Bartels, Christian Lubich, and Andreas Prohl.
\newblock Convergent discretization of heat and wave map flows to spheres using
  approximate discrete lagrange multipliers.
\newblock {\em Mathematics of computation}, 78(267):1269--1292, 2009.

\bibitem{benluozha10}
J.~D. Benamou, S.~Luo, and H.~Zhao.
\newblock {A Compact Upwind Second Order Scheme for the Eikonal Equation}.
\newblock {\em J. Computational Mathematics}, 28(4):489--516, 2010.

\bibitem{bor16}
A.~Boritchev.
\newblock {Multidimensional Potential Burgers Turbulence}.
\newblock {\em Commun. Math. Phys.}, 342:441--489, 2016.

\bibitem{boscheri2021structure}
Walter Boscheri, Michael Dumbser, Matteo Ioriatti, Ilya Peshkov, and Evgeniy
  Romenski.
\newblock A structure-preserving staggered semi-implicit finite volume scheme
  for continuum mechanics.
\newblock {\em Journal of Computational Physics}, 424:109866, 2021.

\bibitem{bul92}
A.V. Bulinskii.
\newblock {Central Limit Theorem for the Solution of the Multidimensional
  Burgers Equation with Random Data}.
\newblock {\em Annales Academie Scientiarum Fennice}, 17:11--22, 1992.

\bibitem{cer01}
V.~Cerveny.
\newblock {\em Seismic Ray Theory}.
\newblock Cambridge University Press, 2001.

\bibitem{chaconvladimirsky2012fast}
Adam Chacon and Alexander Vladimirsky.
\newblock Fast two-scale methods for eikonal equations.
\newblock {\em SIAM Journal on Scientific Computing}, 34(2):A547--A578, 2012.

\bibitem{chechokao13}
W.~Chen, C.S. Chou, and C.Y. Kao.
\newblock {L}ax-{F}riedrichs fast sweeping methods for steady state problems
  for hyperbolic conservation laws.
\newblock {\em J.Comput.Phys.}, 234:452--471, 2013.

\bibitem{cor83}
S.~Cornbleet.
\newblock Geometrical optics reviewed: A new light on an old subject.
\newblock {\em {Proc.} {IEEE}}, 71:471--502, 1983.

\bibitem{cristianifalcone2007fast}
Emiliano Cristiani and Maurizio Falcone.
\newblock Fast semi-lagrangian schemes for the eikonal equation and
  applications.
\newblock {\em SIAM Journal on Numerical Analysis}, 45(5):1979--2011, 2007.

\bibitem{fonleu23}
K.W. Fong and S.~Leung.
\newblock {Spherical Essentially Non-Oscillatory (SENO) Interpolation}.
\newblock {\em J. Sci. Comput.}, 94(28), 2023.

\bibitem{ham63}
S.W.R. Hamilton.
\newblock {\em Elements of Quaternions}.
\newblock Chelsea Publishing Co., London, 1963.

\bibitem{hqcwwl21}
J.~Hu, J.~Qian, J.~Cao, X.~Wang, H.~Wang, and S.~Leung.
\newblock Ray-illumination compensation for adjoint-state first-arrival
  traveltime tomography.
\newblock {\em {Geophysics}}, 86(5):U109--U119, 2021.

\bibitem{huzha24}
R.~Hu and Y.~T. Zhang.
\newblock {High Order Absolutely Convergent Fast Sweeping Methods with
  Multi-resolution WENO Local Solvers for Eikonal and Factored Eikonal
  Equations}.
\newblock {\em J. Sci. Comput.}, 99(67), 2024.

\bibitem{iserlesetal2000lie}
Arieh Iserles, Hans~Z. Munthe-Kaas, Syvert~P. N{\o}rsett, and Antonella Zanna.
\newblock Lie-group methods.
\newblock {\em Acta Numerica}, 9:215--365, 2000.

\bibitem{jeongwhitaker2008fast}
Won-Ki Jeong and Ross~T. Whitaker.
\newblock A fast iterative method for eikonal equations.
\newblock {\em SIAM Journal on Scientific Computing}, 30(5):2512--2534, 2008.

\bibitem{julgub77}
B.R. Julian and D.~Gubbins.
\newblock Three-dimensional seismic ray tracing.
\newblock {\em J.Geophys.}, 43:95--113, 1977.

\bibitem{kaooshqia04}
C.Y. Kao, S.J. Osher, and J.~Qian.
\newblock {L}ax-{F}riedrichs sweeping schemes for static {H}amilton-{J}acobi
  equations.
\newblock {\em J. Comp. Phys.}, 196:367--391, 2004.

\bibitem{lar25}
A.~Larios.
\newblock Burgers equation with a twist: A study on rotational-form equations.
\newblock {\em arXiv:2510.02761}, 2025.

\bibitem{leeleu26}
Y.K. Lee and S.~Leung.
\newblock {High-Order Spherical Runge-Kutta Schemes Based on Spherical Linear
  Extrapolation}.
\newblock {\em J. Sci. Comput.}, 107(3):103, 2026.

\bibitem{leuchalee24}
S.~Leung, W.M. Chau, and Y.K. Lee.
\newblock {SLERP-TVDRK (STVDRK) Methods for Ordinary Differential Equations on
  Spheres}.
\newblock {\em J. Sci. Comput. (arXiv:2410.10420)}, 101(63), 2024.

\bibitem{leuqia0601}
S.~Leung and J.~Qian.
\newblock An adjoint state method for three-dimensional transmission traveltime
  tomography using first-arrivals.
\newblock {\em Commun.Math.Sci.}, 4:249--266, 2006.

\bibitem{lileuqia14}
W.B. Li, S.~Leung, and J.~Qian.
\newblock A level-set adjoint-state method for crosswell
  transmission-reflection traveltime tomography.
\newblock {\em Geophs. J. Int.}, 199(1):348--367, 2014.

\bibitem{luoqiabur14}
S.~Luo, J.~Qian, and R.~Burridge.
\newblock High-order factorization based high-order hybrid fast sweeping
  methods for point-source eikonal equations.
\newblock {\em SIAMJ.Num.Anal.}, 52:23--44, 2014.

\bibitem{miksiszhang2024sparse}
Zachary~M. Miksis and Yong-Tao Zhang.
\newblock Sparse-grid implementation of fixed-point fast sweeping {WENO}
  schemes for eikonal equations.
\newblock {\em Communications on Applied Mathematics and Computation},
  6(1):3--29, 2024.
\newblock First published online in 2022.

\bibitem{ockst02}
S.~Osher, L.-T. Cheng, M.~Kang, H.~Shim, and Y-H Tsai.
\newblock {Geometric Optics in a Phase-Space-Based Level Set and Eulerian
  Framework}.
\newblock {\em J. Comput. Phys.}, 179(2):622--648, 2002.

\bibitem{Oshfed03}
S.~J. Osher and R.~P. Fedkiw.
\newblock {\em Level Set Methods and Dynamic Implicit Surfaces}.
\newblock Springer-Verlag, New York, 2003.

\bibitem{oshset88}
S.~J. Osher and J.~A. Sethian.
\newblock Fronts propagating with curvature dependent speed: algorithms based
  on {Hamilton-Jacobi} formulations.
\newblock {\em J.Comput.Phys.}, 79:12--49, 1988.

\bibitem{ple06}
R.E. Plessix.
\newblock A review of the adjoint-state method for computing the gradient of a
  functional with geophysical applications.
\newblock {\em Geophys.J.Internat.}, 167:495--503, 2006.

\bibitem{popset97}
A.~M. Popovici and J.~A. Sethian.
\newblock Three-dimensional traveltime computation using the fast marching
  method.
\newblock In {\em 67th Ann. Internat. Mtg., Soc. Expl. Geophys., Expanded
  Abstracts}, pages 1778--1781. Soc. Expl. Geophys., 1997.

\bibitem{qialeu04}
J.~Qian and S.~Leung.
\newblock A level set based {E}ulerian method for paraxial multivalued
  traveltimes.
\newblock {\em J.Comput.Phys.}, 197:711--736, 2004.

\bibitem{qiazhazha0702}
J.~Qian, Y-T Zhang, and H-K. Zhao.
\newblock A fast sweeping method for static convex {H}amilton--{J}acobi
  equations.
\newblock {\em Journal of Scientific Computing}, 31(1--2):237--271, 2007.

\bibitem{qiazhazha07}
J.~Qian, Y-T Zhang, and H-K. Zhao.
\newblock Fast sweeping methods for eikonal equations on triangulated meshes.
\newblock {\em SIAM J. Numer. Analy.}, 45:83--107, 2007.

\bibitem{rawhausam07}
N.~Rawlinson, J.~Hauser, and M.~Sambridge.
\newblock Seismic ray tracing and wavefront tracking in laterally heterogeneous
  media.
\newblock {\em Adv. Geophs.}, 49:203--267, 2007.

\bibitem{sander2012geodesic}
Oliver Sander.
\newblock Geodesic finite elements on simplicial grids.
\newblock {\em International Journal for Numerical Methods in Engineering},
  92(12):999--1025, 2012.

\bibitem{sander2016higher}
Oliver Sander.
\newblock Geodesic finite elements of higher order.
\newblock {\em IMA Journal of Numerical Analysis}, 36(1):238--266, 2016.

\bibitem{Sethian1996}
J~A Sethian.
\newblock A fast marching level set method for monotonically advancing fronts.
\newblock {\em Proceedings of the National Academy of Sciences},
  93(4):1591--1595, February 1996.

\bibitem{set96}
J.~A. Sethian.
\newblock {\em Level set methods}.
\newblock Cambridge Univ. Press, 1996.

\bibitem{set99siamrev}
J.~A. Sethian.
\newblock Fast marching methods.
\newblock {\em SIAMReview}, 41(2):199--235, 1999.

\bibitem{shoemake_85}
K.~Shoemake.
\newblock Animating rotation with quaternion curves.
\newblock In {\em Proceedings of the 12th annual conference on Computer
  graphics and interactive techniques}, pages 245--254, 1985.

\bibitem{sola}
J.~Sol{\`{a}}.
\newblock Quaternion kinematics for the error-state {K}alman filter.
\newblock {\em arXiv:1711.02508 [CS.RO]}, 2017.

\bibitem{sym98}
W.~W. Symes.
\newblock A slowness matching finite difference method for traveltimes beyond
  transmission caustics.
\newblock In {\em 68th Ann. Internat. Mtg., Soc. Expl. Geophys., Expanded
  Abstracts}, pages 1945--1948. Soc. Expl. Geophys., 1998.

\bibitem{symqia03slowm}
W.~W. Symes and J.~Qian.
\newblock A slowness matching {E}ulerian method for multivalued solutions of
  eikonal equations.
\newblock {\em J. Sci. Comp.}, 19:501--526, 2003.

\bibitem{tncc09}
C.~Taillandier, M.~Noble, H.~Chauris, and H.~Calandra.
\newblock First-arrival traveltime tomography based on the adjoint-state
  method.
\newblock {\em Geophysics}, 74:WCB1--WCB10, 2009.

\bibitem{tcoz03}
R.~Tsai, L.T. Cheng, S.~Osher, and H.~K. Zhao.
\newblock Fast sweeping method for a class of {H}amilton-{J}acobi equations.
\newblock {\em SIAM J. Numer. Anal.}, 41:673--694, 2003.

\bibitem{tsi95}
J.N. Tsitsiklis.
\newblock Efficient algorithms for globally optimal trajectories.
\newblock {\em IEEE Tran. Automatic Control}, 40:1528--1538, 1995.

\bibitem{viag96}
V.~Vinje, E.~Iversen, K.~{\AA}steb{\o}l, and H.~Gj{\o}ystdal.
\newblock Estimation of multivalued arrivals in 3d models using wavefront
  construction - part 1.
\newblock {\em Geophys.Prosp.}, 44:819--842, 1996.

\bibitem{wonleu15}
T.~Wong and S.~Leung.
\newblock A fast sweeping method for eikonal equations on implicit surfaces.
\newblock {\em Accepted by J. Sci. Comput.}, 2015.

\bibitem{xuzhayon10}
S.G. Xu, Y.X. Zhang, and J.H. Yong.
\newblock A fast sweeping method for computing geodesics on triangular
  manifolds.
\newblock {\em IEEE Transactions on Pattern Analysis and Machine Intelligence},
  32(2):231--241, 2010.

\bibitem{zhashu03}
Y.-T. Zhang and C.-W. Shu.
\newblock High order {WENO} schemes for {H}amilton-{J}acobi equations on
  triangular meshes.
\newblock {\em SIAM J. Sci. Comp.}, 24:1005--1030, 2003.

\bibitem{zclzs11}
Y.T. Zhang, S.~Chen, F.~Li, H.~K. Zhao, and C.~W. Shu.
\newblock {Uniformly Accurate Discontinuous Galerkin Fast Sweeping Methods for
  Eikonal Equations}.
\newblock {\em {SIAM J. Sci. Comput.}}, 33(4):1873--1896, 2011.

\bibitem{zhazhaqia06}
Y.T. Zhang, H.K. Zhao, and J.~Qian.
\newblock {High Order Fast Sweeping Methods for Static Hamilton--Jacobi
  Equations}.
\newblock {\em J.Sci.Comput.}, 29:25--56, 2006.

\bibitem{zha05}
H.~K. Zhao.
\newblock Fast sweeping method for eikonal equations.
\newblock {\em Math. Comp.}, 74:603--627, 2005.

\end{thebibliography}

\end{document}